\theoremstyle{plain} 
\newtheorem{theorem}{Theorem}
\theoremstyle{plain}
\theoremstyle{plain} 
\newtheorem{lemma}[theorem]{Lemma}
\theoremstyle{plain} 
\newtheorem{cor}[theorem]{Corollary}
\theoremstyle{plain}
\theoremstyle{definition}
\theoremstyle{definition} 
\newtheorem{definition}[theorem]{Definition}
\theoremstyle{plain}
\theoremstyle{definition}
\theoremstyle{definition}
\theoremstyle{definition} 
\newtheorem*{acknowledgements}{Acknowledgements}
\theoremstyle{plain}
\theoremstyle{definition} 
\newcommand{\R}{\mathbb{R}}
\newcommand{\Rn}{\mathbb{R}^{n}}
\newcommand{\RN}{\mathbb{R}^{N}}
\newcommand{\RNn}{\mathbb{R}^{N \times n}}
\newcommand{\scFl}{\mathscr{F}_{\textrm{loc}}}
\newcommand{\tostar}{\stackrel{\ast}{\rightharpoonup}}
\newcommand{\M}{\mathbb{R}^{N \times n}}
\newcommand{\ud}{\,\mathrm{d}}
\newcommand{\Ln}{\mathscr{L}^{n}}
\newcommand{\Leb}{\mathscr{L}}
\newcommand{\finf}{f^{\infty}}
\newcommand{\BV}{\textrm{BV}}
\newcommand{\Hd}{\mathscr{H}}
\numberwithin{equation}{section}
\numberwithin{theorem}{section}
\title{On growth conditions for quasiconvex integrands} 
\author{Parth Soneji\thanks{E-mail address: \texttt{soneji@math.lmu.de}}}
\affil{\small Ludwig Maximilians University Munich \\Theresienstr. 39, 80333 Munich, Germany}
\date{}
\providecommand{\bysame}{\leavevmode\hbox to3em{\hrulefill}\thinspace}
\providecommand{\MR}{\relax\ifhmode\unskip\space\fi MR }
\providecommand{\MRhref}[2]{%
  \href{http://www.ams.org/mathscinet-getitem?mr=#1}{#2}
}
\providecommand{\href}[2]{#2}
\begin{document}
\maketitle
\vspace*{-20pt}
\begin{abstract}
\noindent We prove that, for $1\leq p< 2$, if a $W^{1,p}$-quasiconvex integrand $f\colon\M\rightarrow\R$ has linear growth from above on the rank-one cone, then it must satisfy this growth for all matrices in $\M$. An immediate corollary of this is, for example, that there can be no quasiconvex integrand that has genuinely superlinear $p$ growth from above for $1<p<2$, but only linear growth in rank-one directions. This result was first conjectured in \cite{soneji14}, with some partial results given. 

The key element of this proof involves constructing a Sobolev function which maps points in a cube to some one-dimensional frame, and moreover preserves boundary values. This construction is an inductive process on the dimension $n$, and involves using a Whitney decomposition.

This technique also allows us to generalise this result for $W^{1,p}$-quasiconvex integrands where $1\leq p < k\leq \min\{n,N\}$. 

\smallskip\noindent\textit{MSC:} 46E35, 49J45

\noindent\textit{Key words:} Quasiconvexity, $W^{1,p}$-- quasiconvexity, growth conditions, determinant
%
%
\end{abstract}

\section{Introduction}

Consider the variational integral
\begin{equation*}
F(u,\Omega):=\int_{\Omega}f(\nabla u(x))\ud x\,\textrm{,}
\end{equation*}
where $\Omega$ is a bounded, open subset of $\Rn$, $u\colon\Omega\rightarrow\RN$ is a vector-valued function, $\nabla u$ denotes the Jacobian matrix of $u$ and $f$ is a Borel measurable function defined on the space $\M$ of all real $N\times n$ matrices, with $N$, $n\geq 2$. 

The notion of quasiconvexity, introduced by Morrey in \cite{morreyqc}, is of central importance in the modern theory of the Calculus of Variations. Recall that $f\colon\RNn\rightarrow\R$ is said to be quasiconvex if it is locally bounded, and for some nonempty bounded, open set $\Omega\subset\Rn$ with $\Ln(\partial\Omega)=0$ we have
\begin{equation}
 \int_{\Omega}f(\xi+\nabla\phi(x))\ud x \geq \Ln(\Omega)f(\xi)\label{qcdef}
\end{equation}
for all $\xi\in\RNn$ and all test functions $\phi\in W^{1,\infty}_{0}(\Omega ;\RN)$. Moreover, it is well-known that if property \eqref{qcdef} holds for some suitable set $\Omega$ then it holds for all bounded open sets $D\subset\Rn$ with $\Ln(\partial D)=0$.

The classical lower semicontinuity result for quasiconvex integrands states that if the integrand $f\colon\RNn\rightarrow\R$ is quasiconvex and satisfies the growth condition
\begin{equation}
 0\leq f(\xi)\leq L(1+|\xi|^{p})\label{introgrowth}
\end{equation}
for all $\xi\in\RNn$, for some constant $L>0$, and some exponent $1\leq p <\infty$, then $F(\cdot,\Omega)$ is sequentially weakly lower semicontinuous in $W^{1,p}(\Omega;\RN)$. Note that here and throughout this paper we may take any norm we wish on the space $\M$. For example, we can set $|\xi|:= \big(\sum_{i=1}^{N}\sum_{j=1}^{n} \xi_{ij}^{2}\big)^{\frac{1}{2}}$. 

 This theorem is essentially due to Morrey \cite{morreyqc, morreybook}, who proved sequential weak* lower semicontinuity of $F$ in $W^{1,\infty}(\Omega;\RN)$ in the case where the quasiconvex integrand $f$ need only be locally bounded. Refinements were made most notably by Meyers \cite{Meyers}, Acerbi and Fusco \cite{acerbifusco}, and Marcellini \cite{Marcelliniapp}. In fact, it has been shown that lower semicontinuity obtains even if $f$ takes on negative values, provided it satisfies the lower bound $f(\xi)\geq -l(1+|\xi|^{q})$ for some fixed constant $l>0$ where (if $p>1$) $1\leq q <p$, or $q=p=1$.

\bigskip\noindent Now let us recall the notion of $W^{1,p}$-- quasiconvexity, introduced and studied in a well-known paper by Ball and Murat \cite{BallMurat}, which generalises in a natural way the quasiconvexity condition of Morrey.
\begin{definition}\label{w1pqc}
Let $f \colon \M \to \R\cup\{+\infty\}$ be Borel-mesurable and bounded below, and fix $1\leq p\leq\infty$ and $\xi  \in \M$. Then $f$ is said to be $W^{1,p}$ --quasiconvex at $\xi$ if and only if for some nonempty bounded, open set $\Omega\subset\Rn$ with $\Ln(\partial \Omega)=0$ we have
\begin{equation}
 \int_{\Omega}f(\xi+\nabla\phi(x))\ud x \geq \Ln(\Omega)f(\xi)\label{W1peq}
\end{equation}
for all $\phi\in W^{1,p}_{0}(\Omega;\RN)$. $f$ is said to be $W^{1,p}$-- quasiconvex if this inequality holds for all $\xi\in\M$. 
\end{definition}

From this definition, we can easily see that if $f$ is  $W^{1,p}$-- quasiconvex, then it is also  $W^{1,q}$-- quasiconvex for all $p\leq q\leq \infty$. Thus $W^{1,1}$-quasiconvexity is the strongest condition and $W^{1,\infty}$-quasiconvexity is the weakest. In their paper, Ball and Murat demonstrate, similarly to Morrey, that $W^{1,p}$ --quasiconvexity is a necessary condition for sequential weak lower semicontinuity in $W^{1,p}(\Omega;\RN)$ (weak* if $p=\infty$). Again, if property \eqref{W1peq} holds for some suitable set $\Omega$ then it holds for all bounded, open sets $D\subset\Rn$ with $\Ln(\partial D)=0$. 

\medskip\noindent Note that in their definition, Ball and Murat require slightly different pre-conditions on $f$: that it is Borel measurable and bounded below, and moreover is permitted to take the value $+\infty$. In light of these discrepancies, the definitions of $W^{1,\infty}$-- quasiconvexity and Morrey's classical definition of quasiconvexity given above (which is also the one given in, for example, \cite{dacorogna}) may not precisely coincide. Therefore, for ease of exposition, let us from now on assume that $f\colon\RNn\rightarrow\R$ is 
\begin{itemize}
 \item[-] continuous (hence locally bounded), and
\item[-] bounded below (i.e. $f\geq -l$ for some fixed constant $l>0$).
\end{itemize}
 In this case, these two notions are equivalent, and will henceforth just be called ``quasiconvexity''. Indeed, as is well known, (the classical definition of) quasiconvexity implies rank-one convexity, which implies separate convexity, which implies (local Lipschitz) continuity. Moreover, the focus of this paper is on growth conditions from above. We remark that by modifying these conditions, some slight generalisations and alternative statements of the results we give are possible: please refer to the end of this section for a brief discussion of these. 

Under these assumptions, the following well-known fact relates this property to Morrey's classical definition of quasiconvexity, and is straightforward to prove.

\begin{lemma}\label{W1pgrowthrel}
 Let $f\colon\M\to\R$ be a continuous function satisfying the growth condition
 \begin{equation}
  -l\leq f(\xi)\leq L(1+|\xi|^{p})\label{modgrowth}
 \end{equation}
for some exponent $1\leq p<\infty$, fixed constants $l$, $L>0$, and all $\xi\in\M$. Then $f$ is quasiconvex if and only if it is $W^{1,p}$-- quasiconvex. 
\end{lemma}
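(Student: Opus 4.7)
The plan is to prove the two implications separately. The implication $W^{1,p}$-quasiconvex $\Rightarrow$ quasiconvex is essentially immediate from the definitions: since $\Omega$ is bounded, $W^{1,\infty}_{0}(\Omega;\RN) \subset W^{1,p}_{0}(\Omega;\RN)$, and so requiring \eqref{W1peq} to hold for every $\phi\in W^{1,p}_{0}$ is a stronger demand than requiring it only for every $\phi\in W^{1,\infty}_{0}$. Thus no growth condition is needed for this direction.

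For the converse (the substantive direction), assume $f$ is quasiconvex in Morrey's sense and satisfies \eqref{modgrowth}, and fix $\xi\in\M$ together with $\phi\in W^{1,p}_{0}(\Omega;\RN)$. The idea is to approximate $\phi$ by test functions for which the classical (Morrey) inequality is already known, and then pass to the limit using the growth bound. Concretely, I would choose $\phi_k\in C^\infty_{c}(\Omega;\RN)\subset W^{1,\infty}_{0}(\Omega;\RN)$ with $\phi_k\to\phi$ in $W^{1,p}$ (standard mollification and cut-off), and, passing to a subsequence, arrange that $\nabla\phi_k\to\nabla\phi$ almost everywhere in $\Omega$. Quasiconvexity of $f$ applied to each $\phi_k$ gives
\begin{equation*}
\int_{\Omega} f(\xi+\nabla\phi_k(x))\ud x \;\geq\; \Ln(\Omega)\,f(\xi),
\end{equation*}
so it suffices to show that the left-hand side converges to $\int_{\Omega}f(\xi+\nabla\phi)\ud x$.

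For this convergence, continuity of $f$ yields $f(\xi+\nabla\phi_k)\to f(\xi+\nabla\phi)$ a.e.\ in $\Omega$. The growth bound \eqref{modgrowth} combined with the lower bound produces the two-sided estimate
\begin{equation*}
-l \;\leq\; f(\xi+\nabla\phi_k(x)) \;\leq\; L\bigl(1+|\xi+\nabla\phi_k(x)|^{p}\bigr) \;\leq\; C\bigl(1+|\nabla\phi_k(x)|^{p}\bigr)
\end{equation*}
for some constant $C=C(L,p,\xi)$. Since $\nabla\phi_k\to\nabla\phi$ in $L^{p}(\Omega;\M)$, the majorants $C(1+|\nabla\phi_k|^{p})$ converge in $L^{1}(\Omega)$ to $C(1+|\nabla\phi|^{p})$. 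The generalised dominated convergence theorem (equivalently, Vitali's theorem applied to the equi-integrable family $\{f(\xi+\nabla\phi_k)+l\}$) then allows the limit to be taken inside the integral, completing the proof. There is no real obstacle here beyond being careful with the two-sided control of $f$ and invoking the generalised DCT rather than the classical one, which is necessary because the majorants themselves depend on $k$.
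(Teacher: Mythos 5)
Your proof is correct and follows essentially the same route as the paper: the trivial inclusion $W^{1,\infty}_0\subset W^{1,p}_0$ for one direction, and for the converse, mollification to smooth test functions plus a.e.\ convergence of gradients (along a subsequence), continuity of $f$, and a Vitali/generalised dominated convergence argument driven by the growth bound to pass to the limit in the quasiconvexity inequality. The only cosmetic difference is that the paper phrases the final step as $L^1$-convergence of the full sequence $f(\nabla\phi_j)$ via a subsequence principle, whereas you observe (correctly) that the inequality only needs to be passed to the limit along one convenient subsequence, which slightly streamlines the argument.
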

\begin{proof}
 Since $W^{1,\infty}_{0}(\Omega;\RN)\subset W^{1,p}_{0}(\Omega;\RN)$ for any bounded, open set $\Omega\subset\Rn$, clearly $W^{1,p}$ --quasiconvexity implies quasiconvexity. 

The other implication follows from the fact that if $f$ satisfies the given growth condition, then $F$ is strongly continuous in $W^{1,p}$. Let $\Omega\subset\Rn$ be open and bounded, with $\Ln(\partial \Omega)=0$, and let $\phi\in W^{1,p}_{0}(\Omega;\RN)$. There exists a sequence $(\phi_{j})\subset C^{\infty}_{c}(\Omega;\RN)$ such that $\phi_{j}\rightarrow \phi$ strongly in $W^{1,p}(\Omega;\RN)$. Hence there is a subsequence $(\phi_{j_{k}})$ such that $\nabla\phi_{j_{k}}(x)\rightarrow\phi(x)$ for $\Ln$-almost all $x\in \Omega$. 

Since $f$ is continuous, $f(\nabla(\phi_{j_{k}}(x))\rightarrow f(\nabla(x))$ almost everywhere too, and hence (since $\Ln(\Omega)<\infty$) in measure. By Vitali's convergence theorem, $(|\phi_{j_{k}}|^{p})$ is equi-integrable, and so by the growth condition \eqref{modgrowth}, so is $(f(\nabla\phi_{j_{k}}))$. Thus by Vitali $f(\nabla\phi_{j_{k}})$ converges strongly in $L^{1}(\Omega)$ to $f(\phi)$. This holds for any subsequence of $(\phi_{j})$, so in fact the full sequence $f(\nabla\psi_{j})$ converges to $f(\nabla\phi)$ in $L^{1}(\Omega)$. Hence we conclude, since $f$ is quasiconvex, that
 \begin{equation*}
  \int_{\Omega}f(\xi+\nabla\phi(x))\ud x = \lim_{j\rightarrow\infty}\int_{\Omega}f(\xi+\nabla\psi_{j}(x))\ud x \geq \Ln(\Omega)f(\xi)\,\textrm{.}
 \end{equation*}
\end{proof}

\bigskip\noindent We now state the main theorems proved in this paper.

\begin{theorem}\label{thm1} Let $1\leq p <2$. Suppose $f\colon\M\to\R$ is a $W^{1,p}$-- quasiconvex function that satisfies the linear growth condition
 \begin{equation}
  f(\xi)\leq L(1+|\xi|)\label{G1}
 \end{equation}
whenever rank$(\xi)\leq 1$. Then in fact $f$ satisfies \eqref{G1} for all matrices $\xi\in\M$ (for perhaps a larger constant $L$), and hence is $W^{1,1}$-- quasiconvex. 
\end{theorem}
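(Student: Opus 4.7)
The strategy is to use $W^{1,p}$-quasiconvexity as an \emph{upper} bound on $f(\xi)$. Applied on the unit cube $Q\subset\Rn$, the definition gives that for every $\phi\in W^{1,p}_0(Q;\RN)$,
\[
  f(\xi) \;\le\; \frac{1}{|Q|}\int_Q f\bigl(\xi+\nabla\phi(x)\bigr)\ud x .
\]
If we can arrange that $\nabla u(x):=\xi+\nabla\phi(x)$ has rank at most $1$ at almost every $x\in Q$, then the hypothesised linear growth on the rank-one cone yields $f(\xi)\le L+\frac{L}{|Q|}\int_Q |\nabla u|\ud x$. Consequently, it suffices to construct $u\in W^{1,p}(Q;\RN)$ satisfying (i) the Dirichlet trace $u=\xi x$ on $\partial Q$, (ii) $\mathrm{rank}\,\nabla u(x)\le 1$ for a.e.\ $x$, and (iii) a scale-invariant bound $\int_Q|\nabla u|\ud x\le C(n,N)(1+|\xi|)|Q|$.

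Since the rank-one cone is not convex, such a $u$ cannot be smooth; instead, the image $u(Q)$ should lie (up to a null set) in a one-dimensional ``frame'' $\Gamma\subset\RN$---a graph of line segments---so that locally $u=\gamma\circ\tau$, with $\gamma$ a Lipschitz parametrisation of $\Gamma$ and $\tau\colon Q\to\R$ a scalar Sobolev factor, whence $\nabla u=\gamma'(\tau)\otimes\nabla\tau$ has rank $\le 1$ automatically. I would build $u$ by induction on the domain dimension $n$: the base case $n=1$ is trivial since $u(x)=\xi x$ already has rank $\le 1$. For the inductive step I would perform a Whitney-type dyadic decomposition of $Q$ that refines toward the $(n-1)$-skeleton, apply the inductive hypothesis on each face to obtain boundary data that already factors through a lower-dimensional frame, and then glue these across the interior by allowing $\tau$ to jump across the Whitney skeleton. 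The scaling of the Whitney subcubes must be tuned so that the $L^1$ contributions form a geometric series, yielding the scale-invariant bound in (iii).

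The hypothesis $p<2$ is essential: the trace of $\tau$ carries a multi-valued winding structure along $\partial Q$ whose natural fractional regularity is $W^{1-1/p,p}$, and compatibility with a one-dimensional frame forces $p<2$. Equivalently, if $p\ge 2$, then the $2\times 2$ minors of $\nabla u$ would lie in $L^1$, and integration by parts against the Dirichlet datum $u=\xi x$ would force every $2\times 2$ subdeterminant of $\xi$ to vanish, obstructing the construction for a generic matrix. The main obstacle is precisely this inductive construction: one must simultaneously track the frame $\Gamma$, the scalar factor $\tau$, and the $L^1$ control across the generations of Whitney subcubes. Once $u$ is in hand, inserting $\phi=u-\xi x$ into $W^{1,p}$-quasiconvexity and invoking the rank-one growth hypothesis yields $f(\xi)\le L'(1+|\xi|)$ for an enlarged constant $L'$, and Lemma~\ref{W1pgrowthrel} applied with exponent $p=1$ then promotes $f$ from $W^{1,p}$-quasiconvex to $W^{1,1}$-quasiconvex.
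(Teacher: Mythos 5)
Your proposal captures the paper's strategy at the right level of abstraction: use $W^{1,p}$-quasiconvexity as an upper bound for $f(\xi)$, construct a Sobolev test map with rank-one gradient almost everywhere and affine boundary data, build it by induction on the domain dimension via a Whitney decomposition, and close with Lemma~\ref{W1pgrowthrel} at $p=1$. The paper's cleaner variant constructs a single $\xi$-independent map $w\colon Q\to\Rn$, equal to the identity on $\partial Q$, mapping onto a one-dimensional frame inside $Q$, and then sets $w_\xi:=\xi w$; this decouples the construction from $\xi$ and from $N$, and makes the scale-invariant $L^1$ bound in your item (iii) automatic.

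However, the construction itself---where essentially all the mathematical content lies---is only gestured at, and two of the structural ideas you lean on do not hold up. First, the factorisation $u=\gamma\circ\tau$ with a scalar $\tau$ and a Lipschitz parametrisation $\gamma$ of the frame $\Gamma$ fails globally: the frame has branch points (vertices of the edge skeleton with degree $\ge 3$), so no Lipschitz curve $\gamma\colon\R\to\Gamma$ can parametrise it, and at preimages of branch points the chain-rule justification of $\mathrm{rank}\,\nabla u\le 1$ breaks down. The paper instead computes the weak derivative directly, cone by cone, in each Whitney cube. Second, ``allowing $\tau$ to jump across the Whitney skeleton'' would make $u=\gamma\circ\tau$ jump as well, destroying $W^{1,p}$ regularity. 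The actual gluing is more delicate: on the one face of a Whitney cube that abuts $2^{n-1}$ smaller next-generation cubes, the building-block map must be modified so that this face is subdivided into $2^{n-1}$ subfaces, with a rescaled copy of $w_{n-1}$ applied on each; the trace identity $w_{n-1}=\iota$ on $\partial Q^{(n-1)}$ is then exactly what makes the traces match across adjacent Whitney cubes of different generations. This modification is the crux, and it in turn forces the induction to pass through $w_{n-1}$ rather than a naive iterated $x/\|x\|$ (the paper's remark after Step~3 exhibits an explicit failure of the latter already in $n=4$). Your sketch does not identify or resolve this compatibility problem, so as written it does not assemble into a $W^{1,p}$ map with the required boundary trace.
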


This result was first proved in the simpler case $n=N=2$ in \cite{soneji14}: here, we are able to include all $n$, $N\geq 2$. Morover, the proof we provide allows us to further generalise the theorem as follows. 

\begin{theorem}\label{thm2}
 Suppose $f\colon\R^{N\times n}\rightarrow\R$ is a $W^{1,p}$-- quasiconvex function for some $1\leq p < k$, where $2\leq k \leq \min\{n,N\}$. Suppose also that $f$ satisfies the growth condition
\begin{equation}
f(\xi)\leq L(1+|\xi|^{q})\label{Gq}
\end{equation}
for some exponent $1\leq q < k$, for all matrices $\xi\in\R^{N\times n}$ such that $\textrm{rank}(\xi)\leq k-1$. Then in fact $f$ satisfies \eqref{Gq} for all matrices $\xi\in\R^{N\times n}$ (for perhaps a larger constant $L$), and hence is $W^{1,q}$-- quasiconvex. 
\end{theorem}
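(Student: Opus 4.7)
The plan is to reduce the theorem to the construction of a single test function for the $W^{1,p}$-- quasiconvexity inequality. Fix an arbitrary $\xi\in\R^{N\times n}$; the goal is to show $f(\xi)\leq C(1+|\xi|^{q})$ with $C$ independent of $\xi$. Let $Q:=(-1,1)^{n}$. Suppose one has constructed a map $\pi\colon Q\to\R^{n}$ satisfying (a) $\pi(x)=x$ on $\partial Q$; (b) $\mathrm{rank}(\nabla\pi(x))\leq k-1$ for almost every $x\in Q$; and (c) $\pi\in W^{1,s}(Q;\R^{n})$ for every $s<k$, with an estimate $\|\nabla\pi\|_{L^{s}(Q)}\leq C_{s}$ depending only on $n,k,s$. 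Setting $u(x):=\xi\,\pi(x)$, the function $\varphi(x):=u(x)-\xi x$ lies in $W^{1,p}_{0}(Q;\RN)$ because $p<k$, and $\mathrm{rank}(\nabla u(x))\leq\mathrm{rank}(\nabla\pi(x))\leq k-1$ a.e., so the growth hypothesis \eqref{Gq} applies pointwise to give $f(\nabla u(x))\leq L(1+|\xi|^{q}|\nabla\pi(x)|^{q})$. Combining this with $|Q|f(\xi)\leq\int_{Q}f(\nabla u)\,\mathrm{d}x$ from $W^{1,p}$-- quasiconvexity and the $L^{q}$ bound on $\nabla\pi$ yields $f(\xi)\leq C'(1+|\xi|^{q})$.

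It remains to construct $\pi$, which I would do by induction on $n\geq k$. For the base case $n=k$, take the cube-radial retraction $\pi_{k}(x):=x/\|x\|_{\infty}$. Its image is $\partial[-1,1]^{k}$, a $(k-1)$-dimensional set, which forces $\mathrm{rank}(\nabla\pi_{k})\leq k-1$ almost everywhere, and $|\nabla\pi_{k}(x)|\lesssim\|x\|_{\infty}^{-1}$ yields $\int_{Q}|\nabla\pi_{k}|^{s}\,\mathrm{d}x<\infty$ precisely when $s<k$. For the inductive step $n-1\to n$, given $\pi_{n-1}$ on $[-1,1]^{n-1}$ and identifying each of the $2n$ faces of $\partial[-1,1]^{n}$ with $[-1,1]^{n-1}$, define $\pi_{n}(x):=\pi_{n-1}(x/\|x\|_{\infty})$ where $\pi_{n-1}$ is applied on the appropriate face. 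Because $\pi_{n-1}=\mathrm{id}$ on $\partial[-1,1]^{n-1}$, the definition is consistent across the $(n-2)$-faces shared by adjacent faces; because the image of $\pi_{n-1}$ is $(k-1)$-dimensional, so is the image of $\pi_{n}$, giving the desired rank bound. In cube-polar coordinates $x=r\omega$ with $r=\|x\|_{\infty}$, $\omega\in\partial[-1,1]^{n}$, the chain rule gives $|\nabla\pi_{n}(x)|\lesssim|\nabla\pi_{n-1}(\omega)|/r$, so
\begin{equation*}
\int_{Q}|\nabla\pi_{n}|^{s}\,\mathrm{d}x\lesssim\int_{0}^{1}r^{n-1-s}\,\mathrm{d}r\cdot\int_{\partial[-1,1]^{n}}|\nabla\pi_{n-1}(\omega)|^{s}\,\mathrm{d}\mathscr{H}^{n-1}(\omega),
\end{equation*}
and both factors are finite as $s<k\leq n$ (the second by the inductive bound applied face by face).

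The main technical obstacle is making the above construction fully rigorous. Near the iterated singular strata (the origin, then the centres of lower-dimensional faces, and so on down through the dimensions), $\pi_{n}$ is only locally Lipschitz, and one needs to verify that it defines a genuine Sobolev map on all of $Q$ with no distributional jumps across these strata, as well as that the rank bound $k-1$ holds off a set of Lebesgue measure zero. This is where the Whitney decomposition hinted at in the abstract enters naturally: by dyadically decomposing $Q$ into cubes whose sidelength is comparable to the distance to the singular skeleton, one defines $\pi_{n}$ piecewise on each Whitney cube via a rescaled copy of a lower-dimensional construction and then estimates $\int|\nabla\pi_{n}|^{s}$ as a geometric series whose convergence is equivalent to $s<k$. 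The most delicate step is checking that the pieces defined on adjacent Whitney cubes agree on their common faces, so that the assembled map is a bona fide element of $W^{1,s}$ rather than only piecewise $W^{1,s}$; once this is settled, the passage from the construction to the growth bound on $f$ described in the first paragraph is immediate.
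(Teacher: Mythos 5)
Your reduction of the theorem to the existence of a Sobolev retraction $\pi$ with the three stated properties matches the paper exactly, and the final argument (insert $u=\xi\pi$ into the $W^{1,p}$-quasiconvexity inequality and apply the rank-$(k-1)$ growth bound pointwise) is correct and identical to the paper's proof of Theorems \ref{thm1} and \ref{thm2}. The base case $n=k$, $\pi_{k}(x)=x/\|x\|_{\infty}$, is also the paper's base case. The gap lies in the inductive construction of $\pi$.

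The map $\pi_{n}(x):=\pi_{n-1}(x/\|x\|_{\infty})$ (with $\pi_{n-1}$ applied face-by-face) does \emph{not} satisfy your property (a). For $x\in\partial Q^{(n)}$ we have $x/\|x\|_{\infty}=x$, and then $\pi_{n}(x)=\pi_{n-1}(x)$ on the relevant face; but the inductive hypothesis only gives $\pi_{n-1}=\mathrm{id}$ on $\partial Q^{(n-1)}$, i.e.\ on the \emph{edges} of the face, not the whole face. So $\pi_{n}$ moves interior points of $\partial Q^{(n)}$ and is not the identity there. What you have defined is precisely the paper's ``building-block'' map $u$ of Step 2, for which the paper proves the Sobolev, rank, and integrability estimates, but which is \emph{not} the final $w$.

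Your second paragraph gestures at fixing this via a Whitney decomposition, but the description is misdirected. In the paper the Whitney decomposition is taken with respect to the boundary of a \emph{larger ambient cube} $D=(-3,3)^{n}$, not the singular skeleton of $Q$; its purpose is to ensure that the assembled map agrees with the identity on $\partial D$ in the trace sense, by making $|w(x)-x|\lesssim\mathrm{dist}(x,\partial D)$ (the interior singularities themselves are harmless, handled by Gauss--Green exactly as in your base case). Moreover the delicate matching you flag across adjacent Whitney cubes requires a nontrivial additional idea (the paper's Step 3): on at least one cone of each cube, the face must be subdivided into $2^{n-1}$ subfaces and a rescaled $w_{n-1}$ applied to each; the paper then shows via an explicit counterexample (with the point $(\tfrac12,\tfrac12,\tfrac14,0)$ in $Q^{(4)}$) that this modification is \emph{incompatible} with the naive iterated $x/\|x\|$ construction for $n\geq 4$, and only works because the inductive construction uses the already-corrected map $w_{n-1}$, which equals the identity on each subface boundary. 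Your proposal both lacks this modification and would, if unwound to the naive iterated radial map, run into exactly the obstruction the paper warns about.
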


As a consequence of Lemma \ref{W1pgrowthrel}, the following results immediately follow. In this context, when we say that an integrand $f\colon\M\rightarrow\R$ has  \textit{genuinely growth of order} $p$ (from above), we mean that it satisfies \eqref{modgrowth} for such an exponent $1\leq p<\infty$, and moreover there exists $\xi_{0}\in\M$ such that
\begin{equation*}
 \limsup_{t\rightarrow\infty} \frac{f(t\xi_{0})}{1+|t\xi_{0}|^{p}} >0\,\textrm{,}
\end{equation*}
so, in particular, no exponent $q$,  $1\leq q<p$, would be large enough to bound $f$ in \eqref{modgrowth}.

\begin{cor}\label{cor1}
 There can be no quasiconvex function $f\colon\M\rightarrow\R$ that has genuinely superlinear growth of order $1<p<2$, but only linear growth from above - i.e. \eqref{G1} - on rank-one matrices.
\end{cor}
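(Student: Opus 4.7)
The plan is to argue by contradiction, assembling Lemma \ref{W1pgrowthrel} and Theorem \ref{thm1}. Suppose, towards a contradiction, that such an $f$ exists. By the definition of ``genuinely growth of order $p$'' given just before the statement, $f$ satisfies the global growth bound
\begin{equation*}
 -l\leq f(\xi)\leq L(1+|\xi|^{p})
\end{equation*}
for all $\xi\in\M$, together with some $\xi_{0}\in\M$ such that $\limsup_{t\to\infty} f(t\xi_{0})/(1+|t\xi_{0}|^{p})>0$.

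First, since $f$ is (classically) quasiconvex and the growth exponent $p$ matches, Lemma \ref{W1pgrowthrel} immediately upgrades quasiconvexity of $f$ to $W^{1,p}$--quasiconvexity. Next, the hypothesis that $f$ has only linear growth on rank-one matrices is precisely \eqref{G1} restricted to $\mathrm{rank}(\xi)\leq 1$. Since $1<p<2$, Theorem \ref{thm1} applies, and therefore $f$ satisfies the linear growth bound \eqref{G1} on the whole of $\M$, with some (possibly larger) constant $L'$.

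This global linear bound $f(\xi)\leq L'(1+|\xi|)$ now contradicts the genuine superlinearity at $\xi_{0}$: for $p>1$,
\begin{equation*}
 \limsup_{t\to\infty}\frac{f(t\xi_{0})}{1+|t\xi_{0}|^{p}} \leq \limsup_{t\to\infty}\frac{L'(1+|t\xi_{0}|)}{1+|t\xi_{0}|^{p}} = 0,
\end{equation*}
so the genuine growth condition cannot hold. Hence no such $f$ exists.

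There is no real obstacle in this argument: all the work has already been done in Theorem \ref{thm1}, and the corollary is essentially a repackaging. The only point that warrants care is verifying that the corollary's standing assumptions on $f$ (continuous and bounded below, per the convention fixed just before Lemma \ref{W1pgrowthrel}) are exactly what is needed to invoke Lemma \ref{W1pgrowthrel}, so that the classical quasiconvexity hypothesis transfers to the $W^{1,p}$--quasiconvexity required by Theorem \ref{thm1}.
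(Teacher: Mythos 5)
Your proof is correct and matches the paper's intended argument exactly: the paper states that Corollary \ref{cor1} follows immediately from Lemma \ref{W1pgrowthrel} combined with Theorem \ref{thm1}, and you have simply made explicit the chain (global $p$-growth plus quasiconvexity gives $W^{1,p}$-quasiconvexity by Lemma \ref{W1pgrowthrel}; then Theorem \ref{thm1} upgrades the rank-one linear bound to a global linear bound; then the $\limsup$ computation contradicts genuine $p$-growth).
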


In this connection we refer also to \cite{sverak}, where it was shown that there do indeed exist quasiconvex function of subquadratic growth that are not polyconvex (and hence not convex); the example provided here is in fact isotropic (it has the same growth in all directions).  Corollary \ref{cor1} generalises to:

\begin{cor}\label{cor2}
 Let $2\leq k \leq \min\{n,N\}$, and $1\leq q < p < k$. There can be no quasiconvex function $f\colon\M\rightarrow\R$ that has genuinely $p$-growth, but only $q$-growth from above - i.e. \eqref{Gq} - on matrices $\xi\in\R^{N\times n}$ such that $\textrm{rank}(\xi)\leq k-1$.
\end{cor}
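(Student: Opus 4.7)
The plan is to argue by contradiction, using Theorem \ref{thm2} together with Lemma \ref{W1pgrowthrel}; the corollary is a purely formal consequence of the two results. Suppose there did exist a quasiconvex $f\colon\M\to\R$ with the stated properties: it has genuinely $p$-growth (so in particular $f$ satisfies \eqref{modgrowth} for the exponent $p$, and there is some $\xi_{0}\in\M$ realising the positive $\limsup$), while simultaneously $f(\xi)\leq L(1+|\xi|^{q})$ whenever $\textrm{rank}(\xi)\leq k-1$.

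First I would use Lemma \ref{W1pgrowthrel} to upgrade quasiconvexity to $W^{1,p}$-- quasiconvexity. The lemma applies since $f$ is continuous and bounded below (by our standing conventions after the statement of $W^{1,p}$-- quasiconvexity), and carries the global $p$-growth bound built into the definition of genuinely $p$-growth. Because $1\leq p< k\leq\min\{n,N\}$ and $1\leq q<k$, the hypotheses of Theorem \ref{thm2} are now all in place: $f$ is $W^{1,p}$-- quasiconvex, $p<k$, and $f$ satisfies \eqref{Gq} on the set $\{\xi:\textrm{rank}(\xi)\leq k-1\}$. Theorem \ref{thm2} then yields a constant $L'>0$ with
\begin{equation*}
f(\xi)\leq L'(1+|\xi|^{q})\quad\text{for every }\xi\in\M.
\end{equation*}

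Finally I would derive the contradiction from $q<p$. For every $\xi_{0}\in\M$,
\begin{equation*}
\frac{f(t\xi_{0})}{1+|t\xi_{0}|^{p}}\;\leq\;\frac{L'(1+|t\xi_{0}|^{q})}{1+|t\xi_{0}|^{p}}\;\longrightarrow\;0\quad\text{as }t\rightarrow\infty,
\end{equation*}
so no $\xi_{0}$ can realise a positive $\limsup$ at rate $p$. This directly contradicts the assumption of genuine $p$-growth, and the corollary follows. The only non-trivial input is Theorem \ref{thm2}; no further obstacle arises, and the only small bookkeeping check is that the hypotheses of Lemma \ref{W1pgrowthrel} and Theorem \ref{thm2} align with those of the corollary, which they do once one observes that the definition of genuinely $p$-growth bundles in the global upper bound \eqref{modgrowth}.
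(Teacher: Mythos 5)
Your proof is correct and matches the paper's intended argument: the paper presents Corollary \ref{cor2} as an immediate consequence of Lemma \ref{W1pgrowthrel} and Theorem \ref{thm2}, and you fill in exactly that chain (quasiconvexity plus $p$-growth gives $W^{1,p}$-quasiconvexity via the lemma, Theorem \ref{thm2} upgrades the partial $q$-bound to a global one, and $q<p$ then forces the $\limsup$ defining genuine $p$-growth to vanish). No gap, and no meaningful deviation from the paper.
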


In light of the discussion below concerning the integrands involving the determinant, the following result may also be of interest.

\begin{cor}\label{nodet}
 Suppose $f\colon\R^{N\times n}\rightarrow\R$ is a $W^{1,p}$-- quasiconvex function for some $1\leq p < k$, where $2\leq k \leq \min\{n,N\}$. If $f$ satisfies the upper bound, for some $\gamma\geq 0$
\begin{equation*}
f(\xi)\leq \gamma
\end{equation*}
for all matrices $\xi\in\R^{N\times n}$ such that $\textrm{rank}(\xi)\leq k-1$, then $f$ satisfies this upper bound, with the same constant $\gamma$, on all matrices in $\M$. In particular, if $f$ is non-negative and satisfies $f(\xi)=0$ whenever $\textrm{rank}(\xi)\leq k-1$, then in fact $f$ is identically zero on all of $\M$. 
\end{cor}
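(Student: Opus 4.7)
The plan is to exploit the test-function construction that forms the technical core of Theorem \ref{thm2}'s proof, combined with the $W^{1,p}$-quasiconvexity hypothesis. Fix $\xi_{0}\in\M$ and a bounded, open $\Omega\subset\Rn$ with $\Ln(\partial\Omega)=0$. The Whitney-decomposition construction advertised in the abstract (and deployed in the proof of Theorem \ref{thm2}) will produce, for any $r$ with $1\leq r<k$, a Sobolev map $u\in W^{1,r}(\Omega;\RN)$ that agrees with the linear map $x\mapsto\xi_{0}x$ on $\partial\Omega$ and whose gradient satisfies $\textrm{rank}\,\nabla u(x)\leq k-1$ for $\Ln$-a.e.\ $x\in\Omega$. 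Choosing $r=p$ (valid since $p<k$) and setting $\phi(x):=u(x)-\xi_{0}x$, I would obtain $\phi\in W^{1,p}_{0}(\Omega;\RN)$ with $\xi_{0}+\nabla\phi=\nabla u$ of rank $\leq k-1$ almost everywhere.

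Next I would apply the $W^{1,p}$-quasiconvexity inequality at $\xi_{0}$ to this $\phi$. Since $\xi_{0}+\nabla\phi(x)$ has rank $\leq k-1$ a.e., the hypothesis gives $f(\xi_{0}+\nabla\phi(x))\leq\gamma$ a.e.; combined with the lower bound $f\geq -l$, the integrand is integrable, so
\begin{equation*}
\Ln(\Omega)f(\xi_{0})\;\leq\;\int_{\Omega}f(\xi_{0}+\nabla\phi(x))\ud x\;\leq\;\gamma\,\Ln(\Omega)\,\textrm{,}
\end{equation*}
yielding $f(\xi_{0})\leq\gamma$. Since $\xi_{0}\in\M$ was arbitrary, the first assertion follows. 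The ``in particular'' clause is then immediate: if $f\geq 0$ on $\M$ and $f\equiv 0$ on the rank-$\leq k-1$ cone, applying the above with $\gamma=0$ forces $f\equiv 0$ on all of $\M$.

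The only nontrivial ingredient is the existence of the map $u$, which is precisely what the Whitney-based construction in the proof of Theorem \ref{thm2} delivers; no further machinery is needed for the corollary. The conceptual point is that Theorem \ref{thm2} feeds the same $u$ through a growth bound $L(1+|\xi|^{q})$ on the low-rank cone to propagate that polynomial growth to all of $\M$, while here the tighter constant hypothesis $f\leq\gamma$ survives the same averaging inequality verbatim, producing a uniform bound with the identical constant $\gamma$.
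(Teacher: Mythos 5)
Your proposal is correct and follows essentially the same route as the paper: the paper explicitly states that Corollary~\ref{nodet} is proved ``in exactly the same way'' as Theorem~\ref{thm1}, using the map $\tilde{w}$ of Lemma~\ref{lemma3} to build a test function with gradient of rank at most $k-1$ a.e.\ and affine boundary data, then playing the $W^{1,p}$-quasiconvexity inequality against the pointwise upper bound $f\leq\gamma$ on the low-rank cone. Your observation that the constant $\gamma$ survives the averaging unchanged (unlike the $L(1+|\xi|^{q})$ bound in Theorem~\ref{thm2}, which picks up a multiplicative factor from $\|\nabla\tilde w\|_{L^{q}}$) is exactly why the same constant appears in the conclusion.
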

%

\medskip\noindent The key ingredient in the proof of Theorem \ref{thm1} is a map $w$ defined on the cube $Q=(-1,1)^{n}$, that maps this cube onto a one-dimensional frame and preserves boundary values. Moreover, this $w$ belongs to $W^{1,p}(Q;\Rn)$, for any $1\leq p <2$, and $\textrm{rank}(\nabla w(x))=1$ for almost all $x\in Q$. 

The construction of $w$ is an inductive process on the dimension $n$. For $n=2$ the construction is straightforward, and was also given in \cite{soneji14}: it is the mapping to the boundary of the square. To obtain $w$ for higher dimensions, we assume as an inductive hypothesis that a suitable map $w_{n-1}$ has been constructed for dimension $n-1$. We then construct a map $u$ which maps points in $Q$ to its boundary (one of $2n$ faces), and then apply the map $w_{n-1}$ to each $(n-1)$-dimensional face. We then take a Whitney decomposition of $Q$, and apply such a map $u$ (appropriately scaled, and slightly modified) on each cube of the decomposition, resulting in a map that is the identity on the boundary. This construction is described in detail, with all required properties proved, in the subsequent section. The proof of Theorem \ref{thm2} involves a straightforward generalisation of such a construction.

\subsection{Some background and motivation}

The determinant enables us to readily produce examples of $W^{1,p}$-- quasiconvex functions for integer exponent $p$: for example, if $n=N$, and
\begin{equation*}
 f(\xi)= |\det{\xi}|\,\textrm{,}
\end{equation*}
then $f$ is a quasiconvex (in fact polyconvex - see \cite{ballelastic}) function that satisfies \eqref{modgrowth} for $p=n$, and hence by Lemma \ref{W1pgrowthrel} it is $W^{1,n}$-- quasiconvex. More generally, if $1\leq k \leq\min\{n,N\}$, then by considering the determinant of some $k\times k$ minor we can also provide an example of a $W^{1,k}$-- quasiconvex function. It is interesting to note that such a function $f$ also has the property that
\begin{equation*}
 f(\xi)=0\quad\textrm{whenever rank}(\xi)\leq k-1\,\textrm{.}
\end{equation*}
For example, taking the case $k=2$, any $W^{1,2}$-- quasiconvex function given by the determinant of a $2\times 2$ minor will vanish on all matrices of rank one or below. This observation may lead us to ask whether a similar such property might hold for non-integer $p$. That is, for instance, if $1\leq p<2$ and $f$ is $W^{1,p}$-- quasiconvex, then can we expect any different behaviour, such as growth, on matrices of rank one compared to other matrices? The results contained in this paper establish that, contrary to the determinant, the growth conditions (from above) for such a function $f$ on rank one matrices, in fact in some sense determine growth contitions for all general matrices. 

In a similar vein, it is interesting to note that Corollary \ref{nodet} immediately implies that if $n=N$, $0<\alpha<1$, and $f(\xi)=|\det(\xi)|^{\alpha}$, then the quasiconvex envelope $Qf$ of $f$, defined as 
\begin{equation*}
(Qf)(\xi):= \sup\{g(\xi) : g\leq f\,\textrm{ and }g\,\textrm{ quasiconvex }\,\} \,\textrm{,}
\end{equation*}
is identically zero. More generally, if $f$ is the modulus of the determinant of a minor to the power $\alpha$, then $(Qf)\equiv 0$. This result is already well-known, and more usually proved using properties of the rank-one convex envelope.

\medskip\noindent Another motivation for such a property might be found in \cite{soneji14}: this paper considers the ``Lebesgue-Serrin Extension''
\begin{equation*}
 \mathscr{F}_{\textrm{loc}}(u,\Omega):= \inf_{(u_{j})}\bigg\{ \liminf_{j\rightarrow\infty}\int_{\Omega}f(\nabla u_{j}(x))\ud x\, \left| 
\!\!\begin{array}{rl}
& (u_{j})\subset W_{\textrm{loc}}^{1,p}(\Omega, \RN) \\
 & u_{j}\tostar u\,\,\textrm{in }\BV(\Omega, \RN) 
\end{array} \right. \bigg\} \,\textrm{,}
\end{equation*}
where $u$ is a function of Bounded Variation, for some exponent $1\leq p<\infty$. It was proved by Ambrosio and Dal Maso in \cite{ambrosiodalmaso}, and Fonseca and M{\"u}ller in \cite{fonsecamuller}, that if $f$ is quasiconvex and satisfies \eqref{introgrowth} for $p=1$, then the extension has the integral representation
\begin{equation}
 \scFl(u,\Omega) = \int_{\Omega} f(\nabla u (x))\ud x + \int_{\Omega}\finf \bigg(\frac{D^{s}u}{|D^{s}u|}(x)\bigg)\,\ud |D^{s}u|\,\textrm{,} 
\end{equation}
where $\nabla u$ is the density of the absolutely continuous part of the measure $Du$ with respect to Lebesgue measure, $D^{s}u$ is the singular part of $Du$, $\frac{D^{s}u}{|D^{s}u|}$ is the Radon-Nikod{\'y}m derivative of the measure $D^{s}u$ with respect to its total variation $|D^{s}u|$, and $f^{\infty}$ denotes the \emph{recession function of} $f$, defined as
\begin{equation*}
 f^{\infty}(\xi):= \limsup_{t\rightarrow\infty}\frac{f(t\xi )}{t}\,\textrm{.}
\end{equation*} 
This integral representation in the convex case was proved earlier by Goffman and Serrin in \cite{goffman}: in this setting, no growth assumptions on the integrand are required. 

Focusing on the quasiconvex case, there have been some more recent results obtained in the \textit{non-standard growth} setting: that is, $f$ satisfies \eqref{introgrowth} for some $p>1$, but we still consider semicontinuity properties with respect to weak* convergence in BV. Such problems where the space of the convergence is below the growth exponent of the integrand were considered in the Sobolev Space setting by, among many others, Bouchitt{\'e}, Fonseca and Mal{\'y} in \cite{relax2, relax1}. In \cite{JanBV}, Kristensen shows that when $f$ is quasiconvex and satisfies the growth condition \eqref{introgrowth} for $1\leq <\frac{n}{n-1}$, $\scFl$ satisfies the lower bound
\begin{equation}
 \scFl(u,\Omega)\geq \int_{\Omega}f(\nabla u(x))\ud x\,\textrm{,}\label{introJanlb}
\end{equation}
whenever $u\in\BV(\Omega;\RN)$. In \cite{soneji}, a lower semicontinuity result in the sequential weak* topology of BV is obtained for $1<p<2$. This result requires us to assume additionally that the maps $(u_{j})$ are bounded uniformly in $L^{q}_{\textrm{loc}}$ for $q$ suitably large, and that the limit map $u$ is sufficiently regular. 

However, neither of these results incorporate the singular part of the measure $Du$ for a map $u\in\BV(\Omega;\RN)$. One particular problem arising here is that if $f$ has superlinear growth in all directions (for example, if it is isotropic), then the recession function $f^{\infty}$, which is crucial for describing the behaviour of the Lebesgue-Serrin extension on the singular part, will just be infinity. Hence, one might wish to somehow ensure that $f$ satisfies
\begin{equation*}
 \finf \bigg(\frac{D^{s}u}{|D^{s}u|}(x)\bigg)<\infty\quad\textrm{for }\,|D^{s}u|\textrm{-a.a. }x\in\Omega\,\textrm{.}
\end{equation*}
Due to Alberti's famous rank-one theorem in \cite{albertirank1}, the term $\frac{D^{s}u}{|D^{s}u|}(x)$ is rank-one for $|D^{s}u|$-almost all $x\in\Omega$. Therefore, a natural additional assumption one might make is that the integrand $f$, whilst it may enjoy superlinear growth in general, should satisfy
\begin{equation*}
 \finf (\xi)<\infty\quad\textrm{whenever rank}(\xi)\leq 1\,\textrm{.}
\end{equation*}
This is equivalent to saying that $f$ should have at most linear growth in rank-one directions. In \cite{soneji14}, it was proved that if $f$ is continuous (not necessarily) quasiconvex, satisfies \eqref{introgrowth} for $1\leq p<2$, but has linear growth in rank-one directions, then, for general $u\in\BV(\Omega;\RN)$, the extension satisfies the upper bound
\begin{equation*}
 \scFl(u,\Omega)\leq L(\Ln(\Omega) + |Du|(\Omega))\,\textrm{.}
\end{equation*}
However this result, combined with the lower bound of Kristensen \eqref{introJanlb}, implies that if $f$ is additionally quasiconvex and $1\leq p<\frac{n}{n-1}$, then it cannot have genuinely superlinear growth in any direction. This can be seen by considering the linear map $u(x)=\xi x$ for any $\xi\in\M$, which yields, for any bounded open set $\Omega\subset\Rn$,
\begin{equation*}
 \Ln(\Omega) f(\xi)\leq \scFl(u,\Omega)\leq L\Ln(\Omega) (1+ |\xi|)\,\textrm{,}
\end{equation*}
 which shows that $f$ has linear growth in all directions. Consequently, linearity on the rank-one cone is in fact not a good assumption. This observation (which is in fact a weaker form of Corollary \ref{cor1}) provided the first motivation for this paper.

\subsection{Remarks on the conditions imposed on the integrand}

 In the statements of the results above, we have assumed that the integrand $f$ is continuous, real-valued  (so the values $\pm\infty$ are excluded), and bounded below. Here, we shall provide a short discussion of how these conditions may be modified to obtain slightly different conclusions, all of which may be easily obtained using the methods in this paper. 

\medskip\noindent We first note that the conclusions regarding growth in Theorem \ref{thm1} and Theorem \ref{thm2} remain unchanged if we just adopt the definition of $W^{1,p}$-quasiconvexity from \cite{BallMurat} as stated in Definition \ref{w1pqc}. However, in order to conclude that the integrands are $W^{1,1}$-- and $W^{1,q}$-- quasiconvex respectively, we make use of the fact that $f$ is upper semicontinuous. 

 This is due to a variant of Lemma \ref{W1pgrowthrel} that is proved in \cite{BallMurat}, which states that if $f$ is upper semicontinuous and satisfies the growth condition \eqref{modgrowth} for some exponent $1\leq p<\infty$ (and hence cannot be $+\infty$), then it is $W^{1,p}$-- quasiconvex if and only if it is $W^{1,\infty}$-- quasiconvex. The proof is similar the one above, and involves a straightforward application of Fatou's Lemma.

\medskip\noindent Moreover, in the subsequent corollaries stated above, we may remove the requirement that the integrand be bounded below. Note that the proof of Lemma \ref{W1pgrowthrel} also tells us that if $f$ is quasiconvex and satisfies, for some exponent $1\leq p<\infty$,
\begin{equation}
  |f(\xi)|\leq L(1+|\xi|^{p})\label{modreal}
\end{equation}
(so it is not necessarily bounded below), then it also satisfies the quasiconvexity inequality \eqref{qcdef} for all $\xi\in\RNn$ and all test functions $\phi\in W^{1,p}_{0}(\Omega ;\RN)$. Thus, in the statement of Corollary \ref{cor1}, for example, we may say that if a quasiconvex function satisfies \eqref{modreal} for some $1<p<2$, but only has linear growth from above on rank-one matrices, then it must have linear growth from above on all matrices (but the possibility of having superlinear, subquadratic growth from below is not ruled out). Similarly, a generalised type of statement can be formulated for Corollary \ref{cor2}.

%
%

\begin{acknowledgements}
 The author would like to gratefully acknowledge the support of the Ludwig-Maximilians-University, Munich.  He would also like to extend thanks to Jan Kristensen and Lars Diening for numerous helpful discussions.
\end{acknowledgements}

\section{Proofs of the main results}

As indicated above, the proof of Theorem \ref{thm1} depends on the following Lemma.

\begin{lemma}\label{lemma2}
Let $1\leq p <2$. Let $Q=Q^{(n)}=(-1,1)^{n}$. Then there exists a map $w\colon Q\rightarrow\Rn$ such that $w\in W^{1,p}(Q;\Rn)$, $w$ maps $Q$ onto (a one-dimensional frame within) $Q$, $w$ equals the identity map $\iota$ on $\partial Q$ in the sense of traces, and $\textrm{rank}(\nabla w(x))=1$ for $\Ln$-almost all $x\in Q$.
\end{lemma}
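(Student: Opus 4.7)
The proof proceeds by induction on $n$.

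\smallskip
\noindent\emph{Base case ($n=2$).} Take $w_2(x):=x/\|x\|_\infty$, the radial projection from the origin onto $\partial Q^{(2)}$. This map sends $Q^{(2)}$ onto $\partial Q^{(2)}$ (already a one-dimensional frame), equals the identity on $\partial Q^{(2)}$, and is constant along each ray from the origin, so $\mathrm{rank}(\nabla w_2)=1$ almost everywhere. A direct computation in each of the four regions $\{|x_i|>|x_j|,\ j\neq i\}$ yields $|\nabla w_2(x)|\sim 1/\|x\|_\infty$, hence $w_2\in W^{1,p}(Q^{(2)};\R^2)$ for every $p<2$.

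\smallskip
\noindent\emph{Inductive step.} Given $w_{n-1}$, I would construct $w_n$ in two stages. \emph{First,} define an auxiliary map $\tilde u\colon Q^{(n)}\to Q^{(n)}$ by composing the radial projection $\pi(x):=x/\|x\|_\infty$ with the application of $w_{n-1}$ on each $(n-1)$-face of $\partial Q^{(n)}$ (each face identified with $Q^{(n-1)}$ by omitting the constant coordinate). The chain rule gives $\nabla\tilde u=\nabla w_{n-1}(\pi(x))\cdot\nabla\pi(x)$, so $\mathrm{rank}(\nabla\tilde u)\leq 1$ almost everywhere, and the image $\tilde u(Q^{(n)})$ is the union of the $2n$ one-dimensional frames produced by $w_{n-1}$ on the faces of $Q^{(n)}$ — still a $1$-dimensional set. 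A straightforward induction yields $|\nabla\tilde u(x)|\sim 1/|x_{[n-1]}|$, where $|x_{[n-1]}|$ denotes the second-smallest of $|x_1|,\ldots,|x_n|$, and integration in sorted coordinates then shows $\tilde u\in W^{1,p}(Q^{(n)};\R^n)$ for every $p<2$. However, $\tilde u$ is \emph{not} the identity on $\partial Q^{(n)}$: on each face it equals $w_{n-1}$, which is the identity only on the $(n-2)$-skeleton.

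\emph{Second,} take a Whitney decomposition $\{Q_j\}$ of $Q^{(n)}$ into dyadic cubes with side lengths $s_j\sim\mathrm{dist}(Q_j,\partial Q^{(n)})$. On each $Q_j$ with centre $c_j$, set
\[
w_n(x):=c_j+\tfrac{s_j}{2}\,\tilde u\!\left(\tfrac{2(x-c_j)}{s_j}\right),\qquad x\in Q_j,
\]
after a small modification of $\tilde u$ chosen so that the pieces match on faces shared by adjacent Whitney cubes. Because the cubes shrink near $\partial Q^{(n)}$, $|w_n(x)-x|\leq\sqrt{n}\,s_j\to 0$ as $x\to\partial Q^{(n)}$, so $w_n=\mathrm{id}$ on $\partial Q^{(n)}$ in the trace sense. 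The image $w_n(Q^{(n)})=\bigcup_j w_n(Q_j)$ is a countable union of $1$-frames (one inside each Whitney cube), hence still $1$-dimensional, and the rank-one property is preserved by affine scaling. The $W^{1,p}$ estimate comes out sharply, since $|\nabla w_n(x)|=|\nabla\tilde u(y)|$ for $y=2(x-c_j)/s_j$ gives $\int_{Q_j}|\nabla w_n|^p\,\ud x=(s_j/2)^n\int_{Q^{(n)}}|\nabla\tilde u|^p\,\ud y$, and summing over $j$ using $\sum_j s_j^n=\Ln(Q^{(n)})=2^n$ yields $\int_{Q^{(n)}}|\nabla w_n|^p\,\ud x=\int_{Q^{(n)}}|\nabla\tilde u|^p\,\ud y<\infty$.

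\smallskip
\noindent\emph{Main obstacle.} The delicate step is the local modification of $\tilde u$ before scaling. Without it, on an $(n-1)$-dimensional face shared by two adjacent Whitney cubes $Q_j,Q_k$, the restrictions of $w_n$ from either side take values in disjoint $1$-frames inside $Q_j$ and $Q_k$, producing a jump of order $s_j$ — a singular contribution to the distributional gradient that would place $w_n$ in $BV$ rather than $W^{1,p}$. The modification must reconcile these traces while simultaneously preserving the rank-one constraint, the $1$-dimensional image, and the sharp scaling above; it must also cope with adjacent Whitney cubes of possibly different (though comparable) sizes. This gluing across the Whitney interfaces is the technical heart of the construction.
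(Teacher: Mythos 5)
Your architecture matches the paper's exactly: induction on $n$ starting from $w_{2}(x)=x/\|x\|_{\infty}$, a building-block map $\tilde u$ obtained by composing the projection $x\mapsto x/\|x\|_{\infty}$ with $w_{n-1}$ applied face-by-face, and a Whitney decomposition with scaled copies glued on each cube, with the identity trace recovered because cube diameters vanish near $\partial Q$. The scaling identity $\int_{Q_j}|\nabla w_n|^p\ud x=(s_j/2)^{n}\int|\nabla\tilde u|^p\ud y$ and the resulting summation over the Whitney cubes are also correct and are the ones the paper uses.

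However, you have correctly identified, and then left open, the one genuinely nontrivial step: the ``small modification of $\tilde u$'' that makes adjacent Whitney-cube pieces agree on shared faces. You flag it as the technical heart and do not supply it, so as written the proof is incomplete precisely where the difficulty lies. Here is the missing idea. If two Whitney cubes of the same generation share a full $(n-1)$-face, the traces of the scaled $\tilde u$ already match automatically, because $\tilde u$ restricted to a boundary face $F_k$ is exactly $w_{n-1}$ applied to $F_k$ (the projection is the identity there). The mismatch therefore occurs only across the face where a cube of side $2r$ abuts $2^{n-1}$ cubes of side $r$. The paper's fix is to replace $w_{n-1}$, on the one cone of $\tilde u$ corresponding to that face, by the map $v_{n-1}$ obtained by splitting $(-1,1)^{n-1}$ into its $2^{n-1}$ dyadic quadrants $P_l=z_l+\tfrac12 Q^{(n-1)}$ and setting $v_{n-1}(x)=z_l+\tfrac12 w_{n-1}(2(x-z_l))$ on $P_l$. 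Because $w_{n-1}=\iota$ on $\partial Q^{(n-1)}$ by the inductive hypothesis, $v_{n-1}$ has no jumps across quadrant boundaries and $v_{n-1}=\iota$ on $\partial Q^{(n-1)}$ as well, so the modified cone still agrees with the unmodified cones of the same cube; and its trace on each quadrant is exactly the rescaled $w_{n-1}$ that the abutting smaller cube produces from its side. This dyadic compatibility is what the branching of the Whitney decomposition demands, and it is exactly the trace-matching you deferred.

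Two further remarks. First, the construction only works because $\tilde u$ is defined recursively through $w_{n-1}$; the naive alternative of projecting directly onto the $1$-skeleton of $Q^{(n)}$ fails for $n\geq4$ (the paper exhibits the point $(\tfrac12,\tfrac12,\tfrac14,0)\in Q^{(4)}$ where the quadrant modification then produces a discontinuity across two cone boundaries). Second, the paper runs the Whitney decomposition on the enlarged cube $D=(-3,3)^{n}$ so that $(-1,1)^{n}$ is itself a first-generation Whitney cube and the dyadic bookkeeping is clean; on that central cube the modification is applied on all $2n$ cones rather than one. These are organizational choices, but the quadrant modification of $w_{n-1}$ is the substantive ingredient your proposal is missing.
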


The majority of this section is devoted to proving this result, which we shall do in several steps. The proof of Theorem \ref{thm2} depends on the following, generalised version of this lemma.

\begin{lemma}\label{lemma3}
 Let $2\leq k\leq n$, and $1\leq p <k$. Let $Q=Q^{(n)}=(-1,1)^{n}$. Then there exists a map $\tilde{w}\colon Q\rightarrow\Rn$ such that $\tilde{w}\in W^{1,p}(Q;\Rn)$, $\tilde{w}$ maps $Q$ onto (a $(k-1)$-dimensional frame within) $Q$, $\tilde{w}$ equals the identity map $\iota$ on $\partial Q$, and $\textrm{rank}(\nabla \tilde{w}(x))\leq k-1$ for $\Ln$-almost all $x\in Q$.
\end{lemma}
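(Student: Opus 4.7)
The plan is to prove Lemma~\ref{lemma3} by induction on $n \geq k$, following the same blueprint as Lemma~\ref{lemma2} (which is the case $k = 2$). The basic ingredients are radial projection onto the boundary of a cube, composition with the inductive map on each face, and a Whitney decomposition to enforce identity boundary values in the trace sense.

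\textbf{Base case} ($n = k$): the required rank bound $k - 1 = n - 1$ only demands an $(n-1)$-dimensional image, so the radial projection $\tilde{w}(x) := x/\|x\|_\infty$ onto $\partial Q^{(k)}$ will do. On each of the $2k$ closed cones $R_i^\pm := \{x \in Q^{(k)} : \pm x_i = \|x\|_\infty\}$ this maps into a single $(k-1)$-face, so $\textrm{rank}(\nabla \tilde{w}) \leq k - 1$ almost everywhere; a direct computation gives $|\nabla \tilde{w}(x)| \leq C/\|x\|_\infty$, which lies in $L^p$ precisely when $p < k$; and $\tilde{w}$ equals $\iota$ on $\partial Q^{(k)}$ by construction.

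\textbf{Inductive step}: assume the lemma in dimension $n-1$ and let $\tilde{w}_{n-1}$ be the corresponding map. First build a model $g\colon Q^{(n)} \to Q^{(n)}$ by composing the radial projection $u(x) = x/\|x\|_\infty$ with $\tilde{w}_{n-1}$ applied on each face of $\partial Q^{(n)}$, identifying the face $\{y_i = \pm 1\}$ with $Q^{(n-1)}$ by deletion of the $i$-th coordinate. By the chain rule, $\textrm{rank}(\nabla g) \leq \min\{\textrm{rank}(\nabla \tilde{w}_{n-1}), \textrm{rank}(\nabla u)\} \leq k-1$ a.e., and the image of $g$ is a $(k-1)$-dimensional frame inside $\partial Q^{(n)}$. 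Parametrising each cone $R_i^\pm$ by $(r,y')$ with $r = \|x\|_\infty$ and $y' \in Q^{(n-1)}$ yields $\ud x = r^{n-1}\,\ud r\,\ud y'$ and $|\nabla g(x)| \leq C r^{-1}|D\tilde{w}_{n-1}(y')|$, so
\[
\int_{R_i^\pm} |\nabla g|^p \,\ud x \,\leq\, C \int_0^1 r^{n-1-p}\,\ud r \cdot \int_{Q^{(n-1)}} |D\tilde{w}_{n-1}|^p\,\ud y',
\]
which is finite whenever $p < n$, and in particular for $p < k \leq n$; hence $g \in W^{1,p}(Q^{(n)};\Rn)$. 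On the faces of $\partial Q^{(n)}$, however, $g$ equals $\tilde{w}_{n-1}$ rather than the identity. To correct this, take a Whitney decomposition $\{Q_j\}$ of $Q^{(n)}$ with side lengths $\ell_j$ comparable to $\textrm{dist}(Q_j,\partial Q^{(n)})$, and on each $Q_j$ with centre $c_j$ set
\[
\tilde{w}_n(x) := c_j + \tfrac{\ell_j}{2}\, g\!\left(\tfrac{2(x - c_j)}{\ell_j}\right) \qquad (x \in Q_j).
\]
For adjacent equally-sized Whitney cubes, a direct calculation shows that the two traces on their shared face reduce, after the appropriate relabelling of coordinates, to the same embedded copy of $\tilde{w}_{n-1}$, so the pieces glue consistently; since $|\tilde{w}_n(x) - x| \leq C\ell_j = O(\textrm{dist}(x,\partial Q^{(n)}))$ on $Q_j$, the full map has trace $\iota$ on $\partial Q^{(n)}$. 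The rank and image properties descend directly from $g$, and the natural scaling gives $\int_{Q_j}|\nabla \tilde{w}_n|^p\,\ud x = \ell_j^n\int_{Q^{(n)}}|\nabla g|^p\,\ud y$, so summing and using $\sum_j \ell_j^n = \Ln(Q^{(n)})$ yields the global $W^{1,p}$ bound.

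\textbf{Main obstacle}: the delicate point is ensuring that the Whitney pieces assemble into a genuine $W^{1,p}$ function on $Q^{(n)}$. For Whitney cubes of different sizes the traces from opposite sides of a shared face need not match a priori, and the ``slight modification'' of the model map $g$ referred to in the introduction---presumably a refinement of the decomposition along shared boundaries, or a boundary-layer interpolation to the identity within each cube---is what is required to reconcile these traces while simultaneously preserving the rank bound $k-1$ almost everywhere and the $L^p$-summability of the gradient. Once this technical heart of the construction is in place, the remaining estimates are routine verifications.
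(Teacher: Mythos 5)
Your base case, inductive model map $g$ (the paper's $u$), the rank and $L^p$ estimates, the Whitney decomposition, the scaling, and the trace argument at $\partial Q$ are all in line with the paper. But you have correctly flagged, and then not resolved, the genuine crux of the proof: the traces of the rescaled model map do \emph{not} match across a face shared between a Whitney cube and the $2^{n-1}$ smaller cubes of the next generation, and neither of your two speculative fixes is the mechanism the paper uses. A ``boundary-layer interpolation to the identity'' inside each cube would destroy the property that $\tilde w$ maps into a low-dimensional frame and, more seriously, would generically produce full-rank gradients on the interpolation layer, violating $\textrm{rank}(\nabla\tilde w)\leq k-1$ a.e.; and ``refining the decomposition along shared boundaries'' is not by itself a construction.

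What the paper actually does (Step~3) is to modify the building block on exactly those cones whose corresponding face is shared with smaller cubes: write $Q^{(n-1)}=\bigcup_l \overline{P_l}$ with $P_l=z_l+\tfrac12 Q^{(n-1)}$ the $2^{n-1}$ quadrants, define $v_{n-1}(x)=z_l+\tfrac12 w_{n-1}(2(x-z_l))$ on $P_l$, and replace $w_{n-1}$ by $v_{n-1}$ on that face. Because $w_{n-1}=\iota$ on $\partial Q^{(n-1)}$ in the sense of traces, $v_{n-1}=\iota$ on each $\partial P_l$, so $v_{n-1}$ is $W^{1,p}$ on $Q^{(n-1)}$ with the same energy as $w_{n-1}$; and the trace of the large cube's map on the shared face is then precisely the union of the $2^{n-1}$ rescaled copies of $w_{n-1}$ that the small cubes produce from their own side. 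Without this modification the pieces do not glue, so what you present is an outline with the central step missing rather than a proof. Two further things worth noting: the paper's Remark after Step~3 shows that the naive alternative of directly projecting to the $1$-frame (rather than building $u$ inductively from $w_{n-1}$) makes this very modification fail from dimension $4$ on, which is why the induction is unavoidable; and your reasoning ``$|\tilde w_n(x)-x|=O(\mathrm{dist}(x,\partial Q))$ hence trace $\iota$'' needs the cut-off approximation argument as in the paper to be turned into the precise statement $\tilde w\in W^{1,p}_\iota$.
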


Let us first show how the theorems follow from these lemmas. 
\begin{proof}[Proof of Theorem \ref{thm1}]

Let $\xi$ be a general matrix in $\M$. Now define the map $w_{\xi}\colon Q\rightarrow \RN$ by
\begin{equation*}
w_{\xi}(x):= \xi w(x)\,\textrm{, }x\in Q\,\textrm{,} 
\end{equation*}
where $w\colon Q\rightarrow\Rn$ is the map in Lemma \ref{lemma2}. Then we have 
\begin{equation*}
 \nabla w_{\xi}(x) = \xi \nabla w(x)\,\textrm{,}
\end{equation*}
so certainly $w_{\xi}\in W^{1,p}(Q;\RN)$. Moreover, for $x\in \partial Q$, we have (where $I_{n}$ denotes the $n\times n$ identity matrix) 
\begin{equation*}
\nabla w_{\xi}(x)= \xi I_{n}x = \xi x\,\textrm{.} 
\end{equation*}
Therefore, since $f$ is $W^{1,p}$-- quasiconvex (and $\Ln(Q)=1$),
\begin{equation}
 \int_{Q}f(\nabla w_{\xi})\ud x \geq f(\xi)\,\textrm{.}\label{lb}
\end{equation}
In addition, $\textrm{rank}(\nabla w_{\xi}(x))\leq \textrm{rank}\nabla w(x)=1$ for almost all $x\in Q$. Hence by the assumption in the theorem, we have
\begin{align}
 \int_{Q}f(\nabla w_{\xi})\ud x &\leq L\int_{Q} 1+ |\nabla w_{\xi}|\ud x\nonumber \\
&\leq L\Bigg(1 + |\xi|\int_{Q}|\nabla w|\ud x\Bigg)\,\textrm{.}\label{ub}
\end{align}
Since the $L^{1}$-norm of $\nabla w$ is a finite constant, we combine the estimates \eqref{lb} and \eqref{ub} to get
\begin{equation*}
 f(\xi)\leq L'(1+|\xi|)
\end{equation*}
for some constant $L'>0$. 


\end{proof}

\noindent The proofs of Theorem \ref{thm2} and Corollary \ref{nodet} follow in exactly the same way, by using the map $\tilde{w}$ from Lemma \ref{lemma3} instead of $w$ from Lemma \ref{lemma2}.

\medskip\noindent For clarity of exposition, we shall focus on proving Lemma \ref{lemma2}, since this provides us with more concrete assumptions and parameters. We shall then indicate how the proof of Lemma \ref{lemma3} just involves a straightforward generalisation of the technique contained here.

\subsection{Proof of the main lemma}
As stated earlier, the construction of $w$ is an inductive process on the dimension $n$. The base case $n=2$ is straightforward. To construct $w$ ($=w_{n}$) for higher $n$, we first construct a map $u\in W^{1,p}(Q^{(n)};\Rn)$, such that $u$ first maps $Q^{(n)}$ onto $\partial Q^{(n)}$, and then we ``apply $w_{n-1}$'' to each of the $2n$ faces of $\partial Q^{(n)}$. Then we have, using an induction hypothesis, that $\textrm{rank}(\nabla u(x))=1$ for $\Ln$-almost all $x\in Q^{(n)}$. This map $u$ will form a key ``building-block'' for $w$ as stated in the Lemma. We define, for $x=(x_{1},\ldots x_{m})\in\R^{m}$, 
\begin{equation*}
 \|x\|:= \max\{|x_{1}|,\ldots |x_{m}|\}\,\textrm{,}
\end{equation*}
(so $\|\cdot\|$ is just the infinity norm in $\R^{m}$). We shall allow the $m$ to vary during this proof, but in an unambiguous way. Here is a brief outline of the steps used:
\begin{itemize}
 \item[\textbf{Step 1:}] Here we construct $w=w_{2}$ in the base case $n=2$. This was also shown in \cite{soneji14}, and is simply the mapping of all points in the square (apart from $(0,0)$) to its boundary. 
\item[\textbf{Step 2:}] We suppose that for $2\leq d <n$ we have constructed a map $w_{d}$ on $Q^{(d)}:=(-1,1)^{d}$ that satisfies the properties of Lemma \ref{lemma2} for dimension $d$. We use this hypothesis to construct the building block map $u$.  This first maps points in $(-1,1)^{n}$ onto the $(n-1)$-dimensional ``faces'', followed by the map $w_{n-1}$ applied to each face. We observe that for $n=3$, $u$ just maps the cube $Q^{(3)}$ to its one-dimensional edges. 
\item[\textbf{Step 3:}] The map $u$ as it has been constructed is still not exactly what we will need in the construction of $w_{n}$. Namely, on at least on one of the $2n$ faces of $Q^{(n)}$, we do not just apply $w_{n-1}$ but in fact split this face into $2^{n-1}$ ``subfaces'', and apply (an appropriately scaled) $w_{n-1}$ individually to each of these. Here we describe this modification, which will be called $v$. 
\item[\textbf{Step 4:}] We now describe a standard Whitney decomposition of the larger cube $D=(-3,3)^{n}$, containing the smaller cube $(-1,1)^{n}$.
\item[\textbf{Step 5:}] We use this Whitney decomposition to construct the map $w_{n}$ on $D$ (and hence, by appropriate scaling, on $Q$). It essentially involves a translation, dilation, and rotation of the (modified) map $u$ applied to individual cubes within the Whitney decomposition described in Step 4. We show that the map $w_{n}$ satisfies all the required properties.
\item[\textbf{Step 6:}] We indicate how this construction may be generalised to establish Lemma \ref{lemma3}. 
\end{itemize}

\bigskip\noindent\textbf{Step 1: Construction of $w$ in the base case $n=2$}

\nopagebreak\smallskip\noindent For $n=2$ the construction of $w$ is straightforward. Write $Q^{(2)}=(-1,1)^{2}$ and define the map $w_{2}\colon Q^{(2)}\setminus\{(0,0)\}\rightarrow\R^{2}$ as
\begin{equation*}
 w_{2}(x_{1},x_{2}):= \frac{(x_{1},x_{2})}{\|(x_{1},x_{2})\|}\,\textrm{.}
\end{equation*}
Then it is well known that $w_{2}\in W^{1,p}(Q^{(2)};\R^{2})$ for any $1\leq p<2$ (for example, see \cite{BallMurat}). Since $w_{2}$ maps $Q^{(2)}\setminus\{(0,0)\}$ into $\partial Q^{(2)}$, we can conclude that $\det \nabla w_{2} (x)= 0$ on $Q^{(2)}\setminus\{(0,0)\}$, so rank$(\nabla w_{2} (x))\leq 1$ for $\Leb^{2}$-almost all $x\in Q^{(2)}$. Indeed, on $Q^{(2)}\setminus\{|x_{1}|=|x_{2}|\}$, $w_{2}$ has the strong derivatives
\begin{equation*}
 \frac{\partial w_{2}^{j}}{\partial x_{i}} = \frac{\delta{j,i}}{\|x\|}- \frac{x_{i}}{\|x\|^{2}}\frac{\partial \|x\|}{\partial x_{i}}\,\textrm{,}
\end{equation*}
where
\begin{equation*}
 \frac{\partial \|x\|}{\partial x_{i}} =\left\{
\begin{array}{cl}
\textrm{sign }x_{i} &\textrm{if }|x_{i}|=\|x\|\, \textrm{,}\\
0 & \textrm{if }|x_{i}|\neq\|x\|\,\textrm{.}
\end{array} \right.
\end{equation*}
It is easy to establish, using the Gauss-Green Theorem, that this is in fact a weak derivative of $w_{2}$ on all of $Q^{(2)}$, and that it is $p$-integrable. Moreover, for any $x=(x_{1},x_{2})\in Q^{(2)}$ with $|x_{1}|\neq |x_{2}|$, the $i$th column, where $\|(x_{1},x_{2})\|= |x_{i}|$, has zero entries. Hence $w_{2}$ satisfies the required properties for Lemma \ref{lemma2} in the case $n=2$.

\bigskip\noindent \textbf{Step 2: Inductive step and construction of $u$ for higher dimensions} 

\nopagebreak\smallskip\noindent We now deal with higher dimensions inductively. Suppose Lemma \ref{lemma2} holds for every dimension $2\leq d < n$. That is, for every such $d$ there exists a map $w_{d}\colon Q^{(d)}\rightarrow\R^{d}$ such that $w\in W^{1,p}(Q^{(d)};\R^{d})$ for $1\leq p <2$, $w_{d}$ equals the identity map $\iota$ on $\partial Q^{(d)}$, and $\textrm{rank}(\nabla w_{d}(x))=1$ for $\Ln$-almost all $x\in Q^{(d)}$. We now construct the building block map $u$ on $Q^{(n)}$ as follows. Note that $Q^{(n)}$ has $2n$ ``$(n-1)$-faces'', $F_{1},\ldots F_{2n}$, say, where each $F_{k}$ is a set of the form
\begin{equation*}
\{(x_{1},\ldots x_{i_{n}})\in\R^{d} : x_{i_{1}}= \pm 1\,\textrm{, } \|(x_{i_{2}},\ldots x_{i_{n-1}})\|\leq 1\}
\end{equation*}
where $\{i_{1},\ldots , i_{n}\} = \{1,\ldots , n\}$. So every such face is isometrically isomorphic to $\overline{Q^{(n-1)}}$. We define the map $u$ to be first the map
\begin{equation}
 (x_{1},\ldots,x_{n})\mapsto \frac{(x_{1},\ldots,x_{n})}{\|(x_{1},\ldots, x_{n})\|}\,\textrm{,}\label{facemapn}
\end{equation}
which maps points in $Q^{(n)}\setminus\{0\}$ to one of the faces $F_{k}$, followed by the map $w_{n-1}$ ``applied to the face''. That is, 
\begin{equation*}
 u(x):= \Phi_{k}^{-1}\big(w_{n-1}(\Phi_{k}(x/\|x\|))\big)
\end{equation*}
where $x/\|x\| \in F_{k}$ and $\Phi_{k}:F_{k}\rightarrow \overline{Q^{(n-1)}}$ is the isomorphism identifying that face with (the closure of) $Q^{(n-1)}$. So if
\begin{equation}
 F_{1}= \{(x_{1},\ldots , x_{n})\in\R^{n} : x_{1}=1\,\textrm{, } \|(x_{2},\ldots , x_{n})\|\leq 1\}\,\textrm{,}\label{facegenn}
\end{equation}
then
\begin{equation*}
 \Phi_{1}(1,x_{2},\ldots , x_{n}) = (x_{2},\ldots , x_{n})\in \overline{Q^{(n-1)}}\,\textrm{.}
\end{equation*}

We shall now show that $u \in W^{1,p}(Q^{(n)};\R^{n})$. First note that $\overline{Q^{(n)}}$ may be expressed as the union of the closure of $2n$ cones, each cone corresponding to the face mapped-to by the expression in \eqref{facemapn}. We first consider the cone 
\begin{equation}
 C_{1}:= \{  (x_{1}\ldots,x_{n})\in Q^{(n)} : x_{1}> 0\,\textrm{, } \|(x_{2},\ldots x_{n})\|< x_{1}\}\,\textrm{,}\label{cone1n}
\end{equation}
and show $u\in W^{1,p}(C_{1};\R^{n})$. Elements in this cone first get mapped to the face $F_{1}$ from \eqref{facemapn}, and then $w_{n-1}$ is ``applied to the face''. Hence, for $x\in C_{1}\setminus \{(x_{2},\ldots x_{n})=(0,\ldots, 0)\}$ we have, using $\|x\| = x_{1} =:r$, 
\begin{equation}
 u(x) = \bigg(1, w_{n-1}\bigg(\frac{(x_{2},\ldots x_{n})}{r}\bigg)\bigg)\label{un}
\end{equation}
We verify, using the induction hypothesis that $w_{n-1}$ is weakly differentiable, that $u$ is also weakly differentiable on $C_{1}$ with weak derivative
\begin{equation}
 \nabla u (x) = \left(\begin{array}{ccc} 0 & 0 & 0 \\
                        0  & \multicolumn{2}{c}{ r^{-1}\nabla w_{n-1}\big(\frac{(x_{2},\ldots x_{d})}{r}\big)} \\
		      0  \end{array}\right)\,\textrm{.}\label{Dun}
\end{equation}
Moreover, using the inductive hypothesis that $w_{n-1}\in W^{1,p}(Q^{(n-1)};\R^{n-1})$,
\begin{align*}
 \int_{C_{1}}|\nabla u|^{p}\ud x &= \int_{0}^{1}\int_{\{\|(x_{2},\ldots x_{n})\|<r\}}|\nabla u|^{p}\ud\Hd^{n-1}\ud r \\
&= \int_{0}^{1}\int_{\{\|(x_{2},\ldots x_{n})\|<r\}}|\nabla(1, w_{n-1}((x_{2},\ldots x_{d})/r)|^{p}\ud\Hd^{n-1}(x_{2},\ldots x_{n})\ud r \\
&= \int_{0}^{1}r^{-p}\int_{\{\|y\|<r\}}|\nabla w_{n-1}(y/r)|^{p}\ud\Hd^{n-1}(y)\ud r \\
&= \int_{0}^{1}r^{n-1-p}\int_{Q^{(n-1)}}|\nabla w_{n-1}(y)|^{p}\ud\Hd^{n-1}(y)\ud r
 <\infty \,\textrm{.}
\end{align*}
so $u\in W^{1,p}(C_{1};\R^{n})$. Again using our inductive hypothesis and \eqref{Dun}, we establish that for $\Leb^{n}$-almost all $x\in C_{1}$, rank$(\nabla u(x)) \leq 1 $ (in fact, even $\mathscr{H}^{n-1}$-almost everywhere). By arguing similarly on all cones, we obtain $u\in W^{1,p}(C_{k};\R^{n})$ for $k=1,\ldots 2n$, and rank$\nabla u = 1$ almost everywhere. Now suppose $x\in\partial C_{k}$ for some cone $C_{k}$. Then it is also on the boundary of some other cone, and so there exist $i, j\in\{1,\ldots, n\}$ with $i\neq j$  such that $|x_{j}|=|x_{i}|=\|x\|$. Suppose without loss of generality that one cone is $C_{1}$ from \eqref{cone1n} and we have $x_{1}=x_{2}=\|x\|$, so the other cone is
\begin{equation}
 C_{2}:= \{  (x_{1}\ldots,x_{n})\in Q^{(n)} : x_{2}> 0\,\textrm{, } \|(x_{1}, x_{3}, \ldots x_{n})\|< x_{2}\}\,\textrm{.}\label{cone2n}
\end{equation}
Then, using the inductive hypothesis that $w_{n-1}$ is the identity on $\partial Q^{(n-1})$ (in the sense of traces), we see that if we consider $x$ as being on the boundary of $C_{1}$, we have, using \eqref{un},
\begin{equation*}
 u(x) = \bigg(1, \frac{x_{2}}{x_{1}},\frac{x_{3}}{x_{1}}\ldots \frac{x_{n}}{x_{1}}\bigg) = \bigg(\frac{x_{1}}{x_{2}}, 1,\frac{x_{3}}{x_{2}}\ldots \frac{x_{n}}{x_{2}}\bigg)\,\textrm{,}
\end{equation*}
the right hand side being the value of $u(x)$ if $x$ were considered to be on the boundary of $C_{2}$. Hence there are no discontinuities on the boundaries of the cones. Therefore, dealing with the zero point in the centre in the standard way using Gauss-Green, we can conclude that $u$ is weakly differentiable on all of $Q^{(n)}$ with derivative given by the expression in \eqref{Dun} (up to a permutation of coordinates), so $u\in W^{1,p}(Q^{(n)};\R^{n})$ with $\textrm{rank}(\nabla u(x))\leq 1$ for $\Leb^{n}$-almost all $x\in Q^{(n)}$. 

\bigskip\noindent \textbf{Remark on the case $n=3$} 

\smallskip\noindent For illustrative purposes, let us consider in particular the case $d=3$. Moreover, since in this case we are only using the simpler map $w_{2}$ from Step 1, things are more straightforward: the resulting map $u$ maps points in the cube to the one-dimensional edges. Note that the cube $Q^{(3)}=(-1,1)^{3}$ has $6$ ($2$-dimensional) faces, $F_{1},\ldots F_{6}$, say, where each face $F_{k}$ is isometrically isomorphic to $\overline{Q^{(2)}}$. We define the map $u$ as in the general case above, first applying the map $x/\|x\|$ for $x\in Q^{(3)}$, and then $w_{2}$ (i.e. $x/\|x\|$ in dimension $2$) to each face. So if we consider the cone
\begin{equation*}
 C_{1}:= \{  (x_{1},x_{2},x_{3})\in Q^{(3)} : x_{1}> 0\,\textrm{, } \|(x_{2},x_{3})\|< x_{1}\}\,\textrm{,}
\end{equation*}
then for $x\in C_{1}\setminus \{(x_{2},x_{3})=(0,0)\}$ we have
\begin{equation*}
 u(x) = \bigg(1, w_{2}\bigg(\frac{(x_{2},x_{3})}{x_{1}}\bigg)\bigg)= \bigg(1, \frac{x_{2}}{\|(x_{2},x_{3})\|},  \frac{x_{2}}{\|(x_{2},x_{3})\|}\bigg)
\end{equation*}
Just as in the general case, it is straightforward to verify that $u$ is weakly differentiable on $C_{1}$ with weak derivative given by the expression in \eqref{Dun}. In fact, this is even a strong derivative for $x\notin \{(x_{2},x_{3})=(0,0)\}$. Then, using the fact that $w_{2}\in W^{1,p}(Q^{(2)};\R^{2})$ in Step 1, and applying the same argument as in the general inductive step, we can show that $u\in W^{1,p}(C_{1};\R^{3})$. Again, by \eqref{Dun} and Step 1, we can see that for $\Leb^{3}$-almost all $x\in C_{1}$, rank$(\nabla u(x)) \leq 1$ (in fact, this holds even $\mathscr{H}^{2}$-almost everywhere).  By arguing in the same way on all other cones, we obtain $u\in W^{1,p}(C_{k};\R^{3})$ for each $k=1,\ldots 6$, and rank$\nabla u = 1$ $\Leb^{3}$-almost everywhere. Since $u$ is continuous on  
\begin{equation}
 (-1,1)^{3}\setminus \{ (x_{1},x_{2}, x_{3}) : x_{i}=x_{j}=0\,\textrm{ for some }i\neq j\}\,\textrm{,}\label{cont3}
\end{equation}
(i.e. off the axes of $\R^{3}$), there are no discontinuities along the boundaries of these cones. We deal with the singularity at the centre of the cube with Gauss-Green, and conclude that $u$ weakly differentiable on all of $Q^{(3)}$. Hence we can conclude that $u_{3}\in W^{1,p}(Q^{(3)};\R^{3})$ with $\textrm{rank}(\nabla u(x))\leq 1$ for $\Leb^{3}$-almost all $x\in Q^{(3)}$. 

Moreover, for each point $(x_{1},x_{2}, x_{3})$ in the set described in \eqref{cont3}, if we take an index $\{i_{1}, i_{2}, i_{3}\}=\{1,2 , 3\}$ such that $|x_{i_{1}}|\leq |x_{i_{2}}|\leq |x_{i_{3}}|$, then for $j=1,2,3$, (writing $u=(u^{(1)},u^{(2)}, u^{(3)})$)
\begin{equation*}
 u^{(i_{j})}(x) = \textrm{sign}(x_{i_{j}})\,\textrm{,}
\end{equation*}
and 
\begin{equation*}
 u^{(i_{1})}(x) = \frac{x_{i_{1}}}{|x_{i_{2}}|}
\end{equation*}
(i.e. all but one coordinate of $u(x)$ is $\pm 1$, as $u$ maps $x$ onto a one-dimensional edge of $(-1,1)^{3}$). So if, for instance, $x_{1}>x_{2}>x_{3} >0$, then the map u acts on $x$ as follows:
\begin{align*}
 &\Big(x_{1},x_{2}, x_{3}\Big) \mapsto \Big(1,\frac{x_{2}}{x_{1}},\frac{x_{3}}{x_{1}}\Big) \mapsto \Big(1,1, \frac{x_{3}}{x_{2}}\Big) \,\textrm{.}
\end{align*}
In light of this particular case, one might think that we could apply such a simpler construction for higher dimensions as well. That is, our building block map $u$ could just be the map that maps points in the general cube $Q^{(n)}$ to its one-dimensional frame. So we apply the map ``$x/\|x\|$'' to points in the $n$-cube, and then again (in one dimension less) to the $(n-1)$-faces, then again to each of the $(n-2)$-faces of these, and so on ($n-1$ times in total). So, for example, if $x_{1}>x_{2}>\ldots > x_{n} >0$, then such a map would act on $x$ as follows:
\begin{align*}
 &\Big(x_{1},x_{2}, \ldots ,x_{n-1}, x_{n}\Big) \mapsto \Big(1,\frac{x_{2}}{x_{1}},\ldots , \frac{x_{n-1}}{x_{1}}, \frac{x_{n}}{x_{1}}\Big) \mapsto \Big(1,1,\ldots ,\frac{x_{n-1}}{x_{2}}, \frac{x_{n}}{x_{2}}\Big) \mapsto \ldots \mapsto  \Big(1,1,\ldots , 1, \frac{x_{n}}{x_{n-1}}\Big)\,\textrm{.}
\end{align*}
If this were our $u$, we can also verify that $u\in W^{1,p}(Q^{(n)};\Rn)$ and $\textrm{rank}\nabla u (x)\leq 1$ for $\Ln$-almost all $x\in Q^{(n)}$. However, the reason we do not use this map in general is because we cannot successfully apply the next step in our construction of $w$: see the remark following the step.

\bigskip\noindent \textbf{Step 3: Modification of $u$ in some cones of $Q^{(n)}$} 

\nopagebreak\smallskip\noindent  In our eventual construction of $w_{n}$, our ``building-block'' map will not in fact just apply $w_{n-1}$ to each face of an $n$-cube. In (at least) one cone, we will want to split its corresponding $(n-1)$-face into $2^{n-1}$ ``subfaces'', and apply $w_{n-1}$ on each subface instead. Consider $Q^{(n-1)}=(-1,1)^{n-1}$ and note that it can be split up into $2^{n-1}$ quadrants. That is,
\begin{equation*}
 \overline{Q^{(n-1)}}= \bigcup_{l=1}^{2^{n-1}}\overline{P_{l}}
\end{equation*}
where each $P_{l}$ is of the form
\begin{equation*}
 P_{l}= \{ (x_{1},\ldots , x_{n-1})\in Q^{(n-1)} : x_{i_{1}},\ldots , x_{i_{j}} >0\,\textrm{, }x_{i_{j+!}},\ldots , x_{i_{n-1}} <0\}
\end{equation*}
where $j\in \{0, 1, \ldots, n-1\}$ and $\{i_{1},\ldots i_{n-1}\} = \{1,\ldots , n-1\}$. So in fact 
\begin{equation*}
 P_{l}= z_{l}+ \tfrac{1}{2}Q^{(n-1)}
\end{equation*}
where $z_{l} = (\pm \frac{1}{2}, \pm \frac{1}{2},\ldots , \pm \frac{1}{2})\in\R^{n-1}$. Now define $v_{n-1}$ on $Q^{(n-1)}$ by letting
\begin{equation}
 v_{n-1}(x):= z_{l}+\tfrac{1}{2}w_{n-1}(2(x-z_{l}))\,\textrm{, }x\in P_{l}\,\textrm{,}\label{mapvnminus}
\end{equation}
for each of the $2^{n-1}$ cubes $P_{l}$. Note by the inductive hypothesis on $w_{n-1}$, $v_{n-1}(x)=x$ (in the sense of traces) whenever $x$ belongs to the boundary $\partial P_{l}$ for any subface $P_{l}$. Hence $v_{n-1}$ has no discontinuities on each of the boundaries $\partial P_{l}$, and so is weakly differentiable on $Q^{(n-1)}$, with
\begin{align*}
 \int_{Q^{(n-1)}}|\nabla v_{n-1}|^{p}\ud\mathscr{L}^{n-1} &= \sum_{l=1}^{2^{n-1}}\int_{P_{l}}|\nabla v_{n-1}|^{p}\ud\mathscr{L}^{n-1}\\
&= \sum_{l=1}^{2^{n-1}}2^{1-n}\int_{Q^{(n-1)}}|\nabla w_{n-1}|^{p}\ud\mathscr{L}^{n-1}\\
&= \int_{Q^{(n-1)}}|\nabla w_{n-1}|^{p}\ud\mathscr{L}^{n-1}\,\textrm{.}
\end{align*}
We now modify the map $u$ as follows. Let $F_{k}$ be an $(n-1)$-face of $Q^{(n)}$, and let $C_{k}$ be the cone corresponding to points in $Q^{(n)}$ that are mapped to $F_{k}$ by $x\mapsto x/\|x\|$. Suppose we wish to modify $u$ in this cone. Then we split the face $F_{k}$ into $2^{n-1}$ subfaces $P_{1},\ldots P_{2^{n-1}}$ and define, for $x\in C_{k}$,
\begin{equation}
  v(x):= \Phi_{k}^{-1}\big(v_{n-1}(\Phi_{k}(x/\|x\|))\big)\label{mapv}
\end{equation}
where, as in Step 2, $\Phi_{k}$ is the isomorphism identifying that face with $Q^{(n-1)}$. We now argue in entirely the same way as above to show that $v\in W^{1,p}(C_{k};\Rn)$, and $\textrm{rank}\nabla v(x) = 1$ for almost all $x\in C_{k}$. In particular, note that
\begin{equation}
 v(x) = \frac{x}{\|x\|} = u(x)\quad\textrm{ whenever }\Phi_{k}\big(x/\|x\|\big)\in \partial P_{l}\label{bdPl}
\end{equation}
for any $1\leq l\leq 2^{n-1}$. Hence there are no discontinuities along the boundaries of the $2^{n-1}$ ``sub-cones'' $(C^{l}_{k})$ of $C_{k}$, where
\begin{equation*}
 C^{l}_{k}:= \{x\in C_{k} : x/\|x\| \in P_{l}\}\,\textrm{.}
\end{equation*}
Clearly, by considering a rotation of the domain, it does not matter what specific cone of $Q^{(n)}$ we apply this modification to. In the remaining cones of $Q^{(n)}$, we may either not modify $u$ (so $v=u$ off $C_{k}$), or we modify it just as described. In fact, as will be described in Step 5, in our eventual construction of $w_{n}$ we either modify $u$ in this way on only \emph{one} cone, or on \emph{all} cones. 

We now verify that $v\in W^{1,p}(Q^{(n)};\Rn)$. As in Step 2, it suffices to establish that there are no discontinuities on the boundaries of the $2n$ cones that comprise $Q^{(n)}$. Suppose without loss of generality that the cone $C_{1}$ as described in \eqref{cone1n} is the cone where we have modified $u$ as above, with corresponding face $F_{1}$ as in \eqref{facemapn}. Suppose $x\in\partial C_{1}$. Then also $x\in\partial P_{l}$ for some $l$, so by \eqref{bdPl} $v(x)=u(x)$, where $u$ is the unmodified map from Step 2. Since from that step we also know that $u$ has no discontinuities on $\partial C_{1}$, neither does $v$. 

The following diagram roughly illustrates how the map $u$ is modified on the cone $C_{1}$ after the map $x/\|x\|$ has been applied (but of course in $2$ dimensions we do not use this construction).  

\smallskip
\begin{center}
	\begin{pspicture}(-2.9,-2.9)(2.9,2.9)
		\psset{unit=0.9cm}
\psset{linewidth=1.0pt}
		\psframe(-3,-3)(3,3)
\psline(-3,-3)(3,3) \psline(3,-3)(-3,3) \psline[linestyle = dashed, linewidth=1pt ](0,0)(3,0) \psdot(3,0) \psdot(3,1.5)\rput(2.6,1.5){$z_{1}$} \psdot(3,-1.5)\rput(2.6,-1.5){$z_{2}$}\rput(0,1.5){$C_{2}$}\rput(0,-1.5){$C_{4}$}\rput(-1.5,0){$C_{3}$}\rput(1.5,0.7){$C^{1}_{1}$}\rput(1.5,-0.7){$C^{2}_{1}$}\psline[linestyle = dashed, linewidth=1pt ]{|-|}(3.3,-3)(3.3,-0.05)\psline[linestyle = dashed, linewidth=1pt ]{|-|}(3.3,0.05)(3.3,3)\rput[l](3.7,1.5){$\cong P_{1}$ - apply $\frac{1}{2}w_{n-1}(x-z_{1})$}\rput[l](3.7,-1.5){$\cong P_{2}$  - apply $\frac{1}{2}w_{n-1}(x-z_{2})$} \rput(0,3.2){$F_{2}$ - apply $w_{n-1}$}\rput(0,-3.3){$F_{4}$ - apply $w_{n-1}$} \rput[r](-3.2,0){$F_{3}$ - apply $w_{n-1}$}
\end{pspicture}
\end{center}
\nopagebreak\centerline{{\small \textbf{Figure 1:} How the map $u$ is modified on a cone of $Q^{(n)}$} }

\bigskip\noindent \textbf{Remark on possible simplification of $u$}

\smallskip\noindent As was noted in the remark to Step 2, one might intitially wish to construct $u$ as simply the mapping of elements in the cube $Q^{(n)}$ to its one-dimensional edges. However, for $n\geq 4$, the modification of $u$ described in Step 3 fails. 

For example, consider the point $x=(\frac{1}{2},\frac{1}{2},\frac{1}{4},0)\in Q^{(4)}$. Then $\|x\|=x_{1}=x_{2}$, so $x$ belongs to the boundary of the cones $C_{1}$ and $C_{2}$ in \eqref{cone1n} and \eqref{cone2n} respectively. Note that $x/\|x\| = (1,1,\frac{1}{2}, 0)$; if we were to use this simple map $u$, we note $(1,\frac{1}{2},0)$ is already on the boundary of $Q^{(3)}$, so we finally map $(\frac{1}{2},0)$  to $\partial Q^{(2)}$ to obtain $u(x)= (1,1,1,0)$. Suppose we wish to modify $u$ on $C_{1}$ only. In this case we split the cube $Q^{(3)}$ into $8$ subcubes with side length $1$ and centres $(\pm \frac{1}{2},\pm \frac{1}{2},\pm \frac{1}{2})$, and map onto the edges of this finer frame instead. However, then $(1,\frac{1}{2},0)$ already lies on this frame, so we have $v(x)= (1,1,\frac{1}{2}, 0) \neq u(x)$. This demonstrates that there is a discontinuity in $v$ between the boundary of the cones $C_{1}$ and $C_{2}$. 

\bigskip\noindent \textbf{Step 4: Whitney Decomposition of $D$} 

\smallskip\noindent Equipped with this map $v$, we are now almost in a position to define the map $w_{n}$ on $D=(-3,3)^{n}$. We take the standard Whitney Decomposition of $D$ into dyadic cubes whose side length is proportional to the distance from the boundary. We first start with the cube $Q_{1}=Q^{(n)}=(-1,1)^{n}$. Now consider the larger cube $Q_{2}:=(-2,2)^{n}$ and note that $\overline{Q_{2}}\setminus Q$ can be written as the union of the closure of $4^{n}-2^{n}$ cubes with side-length $1$. Each ($n-1$)-face of $Q_{1}$ will have $2^{n-1}$ smaller cubes adjacent to it. Each smaller cube will share one face with (the subset of) a face of the larger cube $Q_{1}$, and all other faces (but one) will be shared with a cube of the same size. Call the set of these smaller cubes $\mathscr{Q}_{2}$

Now let $Q_{3}:= (-\frac{5}{2},\frac{5}{2})^{n}$. Note $\overline{Q_{3}}\setminus Q_{2}$ can be written as the union of (the closure of) $2^{n}(5^{n}-4^{n})$ cubes of side-length $\frac{1}{2}$. Each cube of side length $1$ in the previous step will have its remaining exposed face touching the face of $2^{n-1}$ of these smaller cubes. Call the set of these cubes $\mathscr{Q}_{3}$

We continue inductively in this way. For each integer $k\geq 2$, let 
\begin{equation*}
Q_{k}:= \big((-3+ 2^{2-k}), (3-2^{2-k})\big)^{n} \,\textrm{.}
\end{equation*}
Then $\overline{Q_{k}}\setminus Q_{k-1}$ can be written as the union of (the closure of) $2^{n(k-2)}\Ln(Q_{k}\setminus Q_{k-1})$ cubes of side length $2^{2-k}$. Call this set of cubes $\mathscr{Q}_{k}$. Note that
\begin{equation*}
 D= \bigcup_{k=1}^{\infty}Q_{k} =\bigcup_{k=1}^{\infty} \bigcup_{Q\in\mathscr{Q}_{k}}\overline{Q}\,\textrm{.}  
\end{equation*}

For illustrative purposes, we provide a diagram of this decomposition below in the case where $n=2$, $D=(-3,3)^{2}$. However, recall that in dimension two we do not actually need to use this construction. 

\smallskip\begin{center}
	\begin{pspicture}(-3.7,-3.7)(3.7,3.7)
		\psset{unit=1.3cm}
\psset{linewidth=0.5pt}
		\psframe(-1,-1)(1,1)\psframe(-2,-2)(-1,-1)\psframe(-1,-2)(0,-1)\psframe(0,-2)(1,-1)\psframe(1,-2)(2,-1)\psframe(-2,1)(-1,2)\psframe(-1,1)(0,2)\psframe(0,1)(1,2)\psframe(1,1)(2,2)\psframe(-2,-1)(-1,0)\psframe(-2,0)(-1,1)\psframe(1,-1)(2,0)\psframe(1,0)(2,1)\psframe(-2.5,-2.5)(-2,-2)\psframe(-2,-2.5)(-1.5,-2)\psframe(-1.5,-2.5)(-1,-2)\psframe(-1,-2.5)(-0.5,-2)\psframe(-0.5,-2.5)(0,-2)\psframe(0,-2.5)(0.5,-2)\psframe(0.5,-2.5)(1,-2)\psframe(1,-2.5)(1.5,-2)\psframe(1.5,-2.5)(2,-2)\psframe(2,-2.5)(2.5,-2)\psframe(-2.5,2)(-2,2.5)\psframe(-2,2)(-1.5,2.5)\psframe(-1.5,2)(-1,2.5)\psframe(-1,2)(-0.5,2.5)\psframe(-0.5,2)(0,2.5)\psframe(0,2)(0.5,2.5)\psframe(0.5,2)(1,2.5)\psframe(1,2)(1.5,2.5)\psframe(1.5,2)(2,2.5)\psframe(2,2)(2.5,2.5)\psframe(-2.5,-2)(-2,-1.5)\psframe(-2.5,-1.5)(-2,-1)\psframe(-2.5,-1)(-2,-0.5)\psframe(-2.5,-0.5)(-2,0)\psframe(-2.5,0)(-2,0.5)\psframe(-2.5,0.5)(-2,1)\psframe(-2.5,1)(-2,1.5)\psframe(-2.5,1.5)(-2,2)\psframe(2,-2)(2.5,-1.5)\psframe(2,-1.5)(2.5,-1)\psframe(2,-1)(2.5,-0.5)
		\psframe(2,-0.5)(2.5,0)
		\psframe(2,0)(2.5,0.5)\psframe(2,0.5)(2.5,1)\psframe(2,1)(2.5,1.5)\psframe(2,1.5)(2.5,2)
\psframe(-2.75,-2.75)(-2.5,-2.5)\psframe(-2.5,-2.75)(-2.25,-2.5)\psframe(-2.25,-2.75)(-2,-2.5)\psframe(-2,-2.75)(-1.75,-2.5)\psframe(-1.75,-2.75)(-1.5,-2.5)\psframe(-1.5,-2.75)(-1.25,-2.5)\psframe(-1.25,-2.75)(-1,-2.5)\psframe(-1,-2.75)(-0.75,-2.5)\psframe(-0.75,-2.75)(-0.5,-2.5)\psframe(-0.5,-2.75)(-0.25,-2.5)\psframe(-0.25,-2.75)(0,-2.5)\psframe(0,-2.75)(0.25,-2.5)\psframe(0.25,-2.75)(0.5,-2.5)\psframe(0.5,-2.75)(0.75,-2.5)\psframe(0.75,-2.75)(1,-2.5)\psframe(1,-2.75)(1.25,-2.5)\psframe(1.25,-2.75)(1.5,-2.5)\psframe(1.5,-2.75)(1.75,-2.5)\psframe(1.75,-2.75)(2,-2.5)\psframe(2,-2.75)(2.25,-2.5)\psframe(2.25,-2.75)(2.5,-2.5)\psframe(2.5,-2.75)(2.75,-2.5)\psframe(-2.75,2.5)(-2.5,2.75)\psframe(-2.5,2.5)(-2.25,2.75)\psframe(-2.25,2.5)(-2,2.75)\psframe(-2,2.5)(-1.75,2.75)\psframe(-1.75,2.5)(-1.5,2.75)\psframe(-1.5,2.5)(-1.25,2.75)\psframe(-1.25,2.5)(-1,2.75)\psframe(-1,2.5)(-0.75,2.75)\psframe(-0.75,2.5)(-0.5,2.75)\psframe(-0.5,2.5)(-0.25,2.75)\psframe(-0.25,2.5)(0,2.75)\psframe(0,2.5)(0.25,2.75)
\psframe(0.25,2.5)(0.5,2.75)\psframe(0.5,2.5)(0.75,2.75)\psframe(0.75,2.5)(1,2.75)\psframe(1,2.5)(1.25,2.75)\psframe(1.25,2.5)(1.5,2.75)\psframe(1.5,2.5)(1.75,2.75)\psframe(1.75,2.5)(2,2.75)\psframe(2,2.5)(2.25,2.75)\psframe(2.25,2.5)(2.5,2.75)\psframe(2.5,2.5)(2.75,2.75)\psframe(-2.75,-2.75)(-2.5,-2.5)\psframe(-2.75,-2.5)(-2.5,-2.25)\psframe(-2.75,-2.25)(-2.5,-2)\psframe(-2.75,-2)(-2.5,-1.75)\psframe(-2.75,-1.75)(-2.5,-1.5)\psframe(-2.75,-1.5)(-2.5,-1.25)\psframe(-2.75,-1.25)(-2.5,-1)\psframe(-2.75,-1)(-2.5,-0.75)\psframe(-2.75,-0.75)(-2.5,-0.5)\psframe(-2.75,-0.5)(-2.5,-0.25)\psframe(-2.75,-0.25)(-2.5,0)\psframe(-2.75,0)(-2.5,0.25)\psframe(-2.75,0.25)(-2.5,0.5)\psframe(-2.75,0.5)(-2.5,0.75)\psframe(-2.75,0.75)(-2.5,1)\psframe(-2.75,1)(-2.5,1.25)\psframe(-2.75,1.25)(-2.5,1.5)\psframe(-2.75,1.5)(-2.5,1.75)\psframe(-2.75,1.75)(-2.5,2)\psframe(-2.75,2)(-2.5,2.25)\psframe(-2.75,2.25)(-2.5,2.5)\psframe(-2.75,2.5)(-2.5,2.75)\psframe(2.5,-2.75)(2.75,-2.5)\psframe(2.5,-2.5)(2.75,-2.25)\psframe(2.5,-2.25)(2.75,-2)
\psframe(2.5,-2)(2.75,-1.75)\psframe(2.5,-1.75)(2.75,-1.5)\psframe(2.5,-1.5)(2.75,-1.25)\psframe(2.5,-1.25)(2.75,-1)\psframe(2.5,-1)(2.75,-0.75)\psframe(2.5,-0.75)(2.75,-0.5)\psframe(2.5,-0.5)(2.75,-0.25)\psframe(2.5,-0.25)(2.75,0)\psframe(2.5,0)(2.75,0.25)\psframe(2.5,0.25)(2.75,0.5)\psframe(2.5,0.5)(2.75,0.75)\psframe(2.5,0.75)(2.75,1)\psframe(2.5,1)(2.75,1.25)\psframe(2.5,1.25)(2.75,1.5)\psframe(2.5,1.5)(2.75,1.75)\psframe(2.5,1.75)(2.75,2)\psframe(2.5,2)(2.75,2.25)\psframe(2.5,2.25)(2.75,2.5)\psframe(2.5,2.5)(2.75,2.75)
\psframe(-2.875,-2.875)(-2.75,-2.75)\psframe(-2.75,-2.875)(-2.625,-2.75)\psframe(-2.625,-2.875)(-2.5,-2.75)\psframe(-2.5,-2.875)(-2.375,-2.75)\psframe(-2.375,-2.875)(-2.25,-2.75)\psframe(-2.25,-2.875)(-2.125,-2.75)\psframe(-2.125,-2.875)(-2,-2.75)\psframe(-2,-2.875)(-1.875,-2.75)\psframe(-1.875,-2.875)(-1.75,-2.75)\psframe(-1.75,-2.875)(-1.625,-2.75)\psframe(-1.625,-2.875)(-1.5,-2.75)\psframe(-1.5,-2.875)(-1.375,-2.75)\psframe(-1.375,-2.875)(-1.25,-2.75)\psframe(-1.25,-2.875)(-1.125,-2.75)\psframe(-1.125,-2.875)(-1,-2.75)\psframe(-1,-2.875)(-0.875,-2.75)\psframe(-0.875,-2.875)(-0.75,-2.75)\psframe(-0.75,-2.875)(-0.625,-2.75)\psframe(-0.625,-2.875)(-0.5,-2.75)\psframe(-0.5,-2.875)(-0.375,-2.75)\psframe(-0.375,-2.875)(-0.25,-2.75)\psframe(-0.25,-2.875)(-0.125,-2.75)\psframe(-0.125,-2.875)(0,-2.75)\psframe(0,-2.875)(0.125,-2.75)\psframe(0.125,-2.875)(0.25,-2.75)\psframe(0.25,-2.875)(0.375,-2.75)\psframe(0.375,-2.875)(0.5,-2.75)\psframe(0.5,-2.875)(0.625,-2.75)\psframe(0.625,-2.875)(0.75,-2.75)
\psframe(0.75,-2.875)(0.875,-2.75)\psframe(0.875,-2.875)(1,-2.75)\psframe(1,-2.875)(1.125,-2.75)\psframe(1.125,-2.875)(1.25,-2.75)\psframe(1.25,-2.875)(1.375,-2.75)\psframe(1.375,-2.875)(1.5,-2.75)\psframe(1.5,-2.875)(1.625,-2.75)\psframe(1.625,-2.875)(1.75,-2.75)\psframe(1.75,-2.875)(1.875,-2.75)\psframe(1.875,-2.875)(2,-2.75)\psframe(2,-2.875)(2.125,-2.75)\psframe(2.125,-2.875)(2.25,-2.75)\psframe(2.25,-2.875)(2.375,-2.75)\psframe(2.375,-2.875)(2.5,-2.75)\psframe(2.5,-2.875)(2.625,-2.75)\psframe(2.625,-2.875)(2.75,-2.75)\psframe(2.75,-2.875)(2.875,-2.75)\psframe(-2.875,2.75)(-2.75,2.875)\psframe(-2.75,2.75)(-2.625,2.875)\psframe(-2.625,2.75)(-2.5,2.875)\psframe(-2.5,2.75)(-2.375,2.875)\psframe(-2.375,2.75)(-2.25,2.875)\psframe(-2.25,2.75)(-2.125,2.875)\psframe(-2.125,2.75)(-2,2.875)\psframe(-2,2.75)(-1.875,2.875)\psframe(-1.875,2.75)(-1.75,2.875)\psframe(-1.75,2.75)(-1.625,2.875)\psframe(-1.625,2.75)(-1.5,2.875)\psframe(-1.5,2.75)(-1.375,2.875)\psframe(-1.375,2.75)(-1.25,2.875)
\psframe(-1.25,2.75)(-1.125,2.875)
\psframe(-1.125,2.75)(-1,2.875)
\psframe(-1,2.75)(-0.875,2.875)\psframe(-0.875,2.75)(-0.75,2.875)\psframe(-0.75,2.75)(-0.625,2.875)\psframe(-0.625,2.75)(-0.5,2.875)\psframe(-0.5,2.75)(-0.375,2.875)\psframe(-0.375,2.75)(-0.25,2.875)\psframe(-0.25,2.75)(-0.125,2.875)\psframe(-0.125,2.75)(0,2.875)\psframe(0,2.75)(0.125,2.875)\psframe(0.125,2.75)(0.25,2.875)\psframe(0.25,2.75)(0.375,2.875)\psframe(0.375,2.75)(0.5,2.875)\psframe(0.5,2.75)(0.625,2.875)\psframe(0.625,2.75)(0.75,2.875)\psframe(0.75,2.75)(0.875,2.875)\psframe(0.875,2.75)(1,2.875)\psframe(1,2.75)(1.125,2.875)\psframe(1.125,2.75)(1.25,2.875)\psframe(1.25,2.75)(1.375,2.875)\psframe(1.375,2.75)(1.5,2.875)\psframe(1.5,2.75)(1.625,2.875)\psframe(1.625,2.75)(1.75,2.875)\psframe(1.75,2.75)(1.875,2.875)\psframe(1.875,2.75)(2,2.875)\psframe(2,2.75)(2.125,2.875)\psframe(2.125,2.75)(2.25,2.875)\psframe(2.25,2.75)(2.375,2.875)\psframe(2.375,2.75)(2.5,2.875)\psframe(2.5,2.75)(2.625,2.875)\psframe(2.625,2.75)(2.75,2.875)\psframe(2.75,2.75)(2.875,2.875)
\psframe(-2.875,-2.875)(-2.75,-2.75)\psframe(-2.875,-2.75)(-2.75,-2.625)\psframe(-2.875,-2.625)(-2.75,-2.5)\psframe(-2.875,-2.5)(-2.75,-2.375)\psframe(-2.875,-2.375)(-2.75,-2.25)\psframe(-2.875,-2.25)(-2.75,-2.125)\psframe(-2.875,-2.125)(-2.75,-2)\psframe(-2.875,-2)(-2.75,-1.875)\psframe(-2.875,-1.875)(-2.75,-1.75)\psframe(-2.875,-1.75)(-2.75,-1.625)\psframe(-2.875,-1.625)(-2.75,-1.5)\psframe(-2.875,-1.5)(-2.75,-1.375)\psframe(-2.875,-1.375)(-2.75,-1.25)\psframe(-2.875,-1.25)(-2.75,-1.125)\psframe(-2.875,-1.125)(-2.75,-1)\psframe(-2.875,-1)(-2.75,-0.875)\psframe(-2.875,-0.875)(-2.75,-0.75)\psframe(-2.875,-0.75)(-2.75,-0.625)\psframe(-2.875,-0.625)(-2.75,-0.5)\psframe(-2.875,-0.5)(-2.75,-0.375)\psframe(-2.875,-0.375)(-2.75,-0.25)\psframe(-2.875,-0.25)(-2.75,-0.125)\psframe(-2.875,-0.125)(-2.75,0)\psframe(-2.875,0)(-2.75,0.125)\psframe(-2.875,0.125)(-2.75,0.25)\psframe(-2.875,0.25)(-2.75,0.375)\psframe(-2.875,0.375)(-2.75,0.5)\psframe(-2.875,0.5)(-2.75,0.625)\psframe(-2.875,0.625)(-2.75,0.75)
\psframe(-2.875,0.75)(-2.75,0.875)
\psframe(-2.875,0.875)(-2.75,1)\psframe(-2.875,1)(-2.75,1.125)\psframe(-2.875,1.125)(-2.75,1.25)\psframe(-2.875,1.25)(-2.75,1.375)\psframe(-2.875,1.375)(-2.75,1.5)\psframe(-2.875,1.5)(-2.75,1.625)\psframe(-2.875,1.625)(-2.75,1.75)\psframe(-2.875,1.75)(-2.75,1.875)\psframe(-2.875,1.875)(-2.75,2)\psframe(-2.875,2)(-2.75,2.125)\psframe(-2.875,2.125)(-2.75,2.25)\psframe(-2.875,2.25)(-2.75,2.375)\psframe(-2.875,2.375)(-2.75,2.5)\psframe(-2.875,2.5)(-2.75,2.625)\psframe(-2.875,2.625)(-2.75,2.75)\psframe(-2.875,2.75)(-2.75,2.875)\psframe(2.75,-2.875)(2.875,-2.75)\psframe(2.75,-2.75)(2.875,-2.625)\psframe(2.75,-2.625)(2.875,-2.5)\psframe(2.75,-2.5)(2.875,-2.375)\psframe(2.75,-2.375)(2.875,-2.25)\psframe(2.75,-2.25)(2.875,-2.125)\psframe(2.75,-2.125)(2.875,-2)\psframe(2.75,-2)(2.875,-1.875)\psframe(2.75,-1.875)(2.875,-1.75)\psframe(2.75,-1.75)(2.875,-1.625)\psframe(2.75,-1.625)(2.875,-1.5)\psframe(2.75,-1.5)(2.875,-1.375)\psframe(2.75,-1.375)(2.875,-1.25)
\psframe(2.75,-1.25)(2.875,-1.125)\psframe(2.75,-1.125)(2.875,-1)
\psframe(2.75,-1)(2.875,-0.875)\psframe(2.75,-0.875)(2.875,-0.75)\psframe(2.75,-0.75)(2.875,-0.625)\psframe(2.75,-0.625)(2.875,-0.5)\psframe(2.75,-0.5)(2.875,-0.375)\psframe(2.75,-0.375)(2.875,-0.25)\psframe(2.75,-0.25)(2.875,-0.125)\psframe(2.75,-0.125)(2.875,0)\psframe(2.75,0)(2.875,0.125)\psframe(2.75,0.125)(2.875,0.25)\psframe(2.75,0.25)(2.875,0.375)\psframe(2.75,0.375)(2.875,0.5)\psframe(2.75,0.5)(2.875,0.625)\psframe(2.75,0.625)(2.875,0.75)\psframe(2.75,0.75)(2.875,0.875)\psframe(2.75,0.875)(2.875,1)\psframe(2.75,1)(2.875,1.125)\psframe(2.75,1.125)(2.875,1.25)\psframe(2.75,1.25)(2.875,1.375)\psframe(2.75,1.375)(2.875,1.5)\psframe(2.75,1.5)(2.875,1.625)\psframe(2.75,1.625)(2.875,1.75)\psframe(2.75,1.75)(2.875,1.875)\psframe(2.75,1.875)(2.875,2)\psframe(2.75,2)(2.875,2.125)\psframe(2.75,2.125)(2.875,2.25)\psframe(2.75,2.25)(2.875,2.375)\psframe(2.75,2.375)(2.875,2.5)\psframe(2.75,2.5)(2.875,2.625)\psframe(2.75,2.625)(2.875,2.75)\psframe(2.75,2.75)(2.875,2.875)
\psframe(-2.9375,-2.9375)(-2.875,-2.875)\psframe(-2.875,-2.9375)(-2.8125,-2.875)\psframe(-2.8125,-2.9375)(-2.75,-2.875)\psframe(-2.75,-2.9375)(-2.6875,-2.875)\psframe(-2.6875,-2.9375)(-2.625,-2.875)\psframe(-2.625,-2.9375)(-2.5625,-2.875)\psframe(-2.5625,-2.9375)(-2.5,-2.875)\psframe(-2.5,-2.9375)(-2.4375,-2.875)\psframe(-2.4375,-2.9375)(-2.375,-2.875)\psframe(-2.375,-2.9375)(-2.3125,-2.875)\psframe(-2.3125,-2.9375)(-2.25,-2.875)\psframe(-2.25,-2.9375)(-2.1875,-2.875)\psframe(-2.1875,-2.9375)(-2.125,-2.875)\psframe(-2.125,-2.9375)(-2.0625,-2.875)\psframe(-2.0625,-2.9375)(-2,-2.875)\psframe(-2,-2.9375)(-1.9375,-2.875)\psframe(-1.9375,-2.9375)(-1.875,-2.875)\psframe(-1.875,-2.9375)(-1.8125,-2.875)\psframe(-1.8125,-2.9375)(-1.75,-2.875)\psframe(-1.75,-2.9375)(-1.6875,-2.875)\psframe(-1.6875,-2.9375)(-1.625,-2.875)\psframe(-1.625,-2.9375)(-1.5625,-2.875)\psframe(-1.5625,-2.9375)(-1.5,-2.875)\psframe(-1.5,-2.9375)(-1.4375,-2.875)\psframe(-1.4375,-2.9375)(-1.375,-2.875)\psframe(-1.375,-2.9375)(-1.3125,-2.875)
\psframe(-1.3125,-2.9375)(-1.25,-2.875)\psframe(-1.25,-2.9375)(-1.1875,-2.875)\psframe(-1.1875,-2.9375)(-1.125,-2.875)\psframe(-1.125,-2.9375)(-1.0625,-2.875)\psframe(-1.0625,-2.9375)(-1,-2.875)\psframe(-1,-2.9375)(-0.9375,-2.875)\psframe(-0.9375,-2.9375)(-0.875,-2.875)\psframe(-0.875,-2.9375)(-0.8125,-2.875)\psframe(-0.8125,-2.9375)(-0.75,-2.875)\psframe(-0.75,-2.9375)(-0.6875,-2.875)\psframe(-0.6875,-2.9375)(-0.625,-2.875)\psframe(-0.625,-2.9375)(-0.5625,-2.875)\psframe(-0.5625,-2.9375)(-0.5,-2.875)\psframe(-0.5,-2.9375)(-0.4375,-2.875)\psframe(-0.4375,-2.9375)(-0.375,-2.875)\psframe(-0.375,-2.9375)(-0.3125,-2.875)\psframe(-0.3125,-2.9375)(-0.25,-2.875)\psframe(-0.25,-2.9375)(-0.1875,-2.875)\psframe(-0.1875,-2.9375)(-0.125,-2.875)\psframe(-0.125,-2.9375)(-0.0625,-2.875)\psframe(-0.0625,-2.9375)(0,-2.875)\psframe(0,-2.9375)(0.0625,-2.875)\psframe(0.0625,-2.9375)(0.125,-2.875)\psframe(0.125,-2.9375)(0.1875,-2.875)\psframe(0.1875,-2.9375)(0.25,-2.875)\psframe(0.25,-2.9375)(0.3125,-2.875)
\psframe(0.3125,-2.9375)(0.375,-2.875)\psframe(0.375,-2.9375)(0.4375,-2.875)\psframe(0.4375,-2.9375)(0.5,-2.875)\psframe(0.5,-2.9375)(0.5625,-2.875)\psframe(0.5625,-2.9375)(0.625,-2.875)\psframe(0.625,-2.9375)(0.6875,-2.875)\psframe(0.6875,-2.9375)(0.75,-2.875)\psframe(0.75,-2.9375)(0.8125,-2.875)\psframe(0.8125,-2.9375)(0.875,-2.875)\psframe(0.875,-2.9375)(0.9375,-2.875)\psframe(0.9375,-2.9375)(1,-2.875)\psframe(1,-2.9375)(1.0625,-2.875)
\psframe(1.0625,-2.9375)(1.125,-2.875)\psframe(1.125,-2.9375)(1.1875,-2.875)\psframe(1.1875,-2.9375)(1.25,-2.875)\psframe(1.25,-2.9375)(1.3125,-2.875)\psframe(1.3125,-2.9375)(1.375,-2.875)\psframe(1.375,-2.9375)(1.4375,-2.875)\psframe(1.4375,-2.9375)(1.5,-2.875)\psframe(1.5,-2.9375)(1.5625,-2.875)\psframe(1.5625,-2.9375)(1.625,-2.875)\psframe(1.625,-2.9375)(1.6875,-2.875)\psframe(1.6875,-2.9375)(1.75,-2.875)\psframe(1.75,-2.9375)(1.8125,-2.875)\psframe(1.8125,-2.9375)(1.875,-2.875)\psframe(1.875,-2.9375)(1.9375,-2.875)\psframe(1.9375,-2.9375)(2,-2.875)\psframe(2,-2.9375)(2.0625,-2.875)
\psframe(2.0625,-2.9375)(2.125,-2.875)\psframe(2.125,-2.9375)(2.1875,-2.875)\psframe(2.1875,-2.9375)(2.25,-2.875)\psframe(2.25,-2.9375)(2.3125,-2.875)\psframe(2.3125,-2.9375)(2.375,-2.875)\psframe(2.375,-2.9375)(2.4375,-2.875)\psframe(2.4375,-2.9375)(2.5,-2.875)\psframe(2.5,-2.9375)(2.5625,-2.875)\psframe(2.5625,-2.9375)(2.625,-2.875)\psframe(2.625,-2.9375)(2.6875,-2.875)\psframe(2.6875,-2.9375)(2.75,-2.875)\psframe(2.75,-2.9375)(2.8125,-2.875)\psframe(2.8125,-2.9375)(2.875,-2.875)\psframe(2.875,-2.9375)(2.9375,-2.875)\psframe(-2.9375,2.875)(-2.875,2.9375)\psframe(-2.875,2.875)(-2.8125,2.9375)\psframe(-2.8125,2.875)(-2.75,2.9375)\psframe(-2.75,2.875)(-2.6875,2.9375)\psframe(-2.6875,2.875)(-2.625,2.9375)\psframe(-2.625,2.875)(-2.5625,2.9375)\psframe(-2.5625,2.875)(-2.5,2.9375)\psframe(-2.5,2.875)(-2.4375,2.9375)\psframe(-2.4375,2.875)(-2.375,2.9375)\psframe(-2.375,2.875)(-2.3125,2.9375)\psframe(-2.3125,2.875)(-2.25,2.9375)\psframe(-2.25,2.875)(-2.1875,2.9375)\psframe(-2.1875,2.875)(-2.125,2.9375)
\psframe(-2.125,2.875)(-2.0625,2.9375)\psframe(-2.0625,2.875)(-2,2.9375)\psframe(-2,2.875)(-1.9375,2.9375)\psframe(-1.9375,2.875)(-1.875,2.9375)\psframe(-1.875,2.875)(-1.8125,2.9375)\psframe(-1.8125,2.875)(-1.75,2.9375)\psframe(-1.75,2.875)(-1.6875,2.9375)\psframe(-1.6875,2.875)(-1.625,2.9375)\psframe(-1.625,2.875)(-1.5625,2.9375)\psframe(-1.5625,2.875)(-1.5,2.9375)\psframe(-1.5,2.875)(-1.4375,2.9375)\psframe(-1.4375,2.875)(-1.375,2.9375)\psframe(-1.375,2.875)(-1.3125,2.9375)\psframe(-1.3125,2.875)(-1.25,2.9375)\psframe(-1.25,2.875)(-1.1875,2.9375)\psframe(-1.1875,2.875)(-1.125,2.9375)\psframe(-1.125,2.875)(-1.0625,2.9375)\psframe(-1.0625,2.875)(-1,2.9375)\psframe(-1,2.875)(-0.9375,2.9375)\psframe(-0.9375,2.875)(-0.875,2.9375)\psframe(-0.875,2.875)(-0.8125,2.9375)
\psframe(-0.8125,2.875)(-0.75,2.9375)\psframe(-0.75,2.875)(-0.6875,2.9375)\psframe(-0.6875,2.875)(-0.625,2.9375)\psframe(-0.625,2.875)(-0.5625,2.9375)\psframe(-0.5625,2.875)(-0.5,2.9375)\psframe(-0.5,2.875)(-0.4375,2.9375)\psframe(-0.4375,2.875)(-0.375,2.9375)
\psframe(-0.375,2.875)(-0.3125,2.9375)\psframe(-0.3125,2.875)(-0.25,2.9375)\psframe(-0.25,2.875)(-0.1875,2.9375)\psframe(-0.1875,2.875)(-0.125,2.9375)\psframe(-0.125,2.875)(-0.0625,2.9375)\psframe(-0.0625,2.875)(0,2.9375)\psframe(0,2.875)(0.0625,2.9375)\psframe(0.0625,2.875)(0.125,2.9375)\psframe(0.125,2.875)(0.1875,2.9375)\psframe(0.1875,2.875)(0.25,2.9375)\psframe(0.25,2.875)(0.3125,2.9375)\psframe(0.3125,2.875)(0.375,2.9375)\psframe(0.375,2.875)(0.4375,2.9375)\psframe(0.4375,2.875)(0.5,2.9375)\psframe(0.5,2.875)(0.5625,2.9375)\psframe(0.5625,2.875)(0.625,2.9375)\psframe(0.625,2.875)(0.6875,2.9375)\psframe(0.6875,2.875)(0.75,2.9375)\psframe(0.75,2.875)(0.8125,2.9375)\psframe(0.8125,2.875)(0.875,2.9375)\psframe(0.875,2.875)(0.9375,2.9375)\psframe(0.9375,2.875)(1,2.9375)\psframe(1,2.875)(1.0625,2.9375)
\psframe(1.0625,2.875)(1.125,2.9375)\psframe(1.125,2.875)(1.1875,2.9375)\psframe(1.1875,2.875)(1.25,2.9375)\psframe(1.25,2.875)(1.3125,2.9375)\psframe(1.3125,2.875)(1.375,2.9375)
\psframe(1.375,2.875)(1.4375,2.9375)\psframe(1.4375,2.875)(1.5,2.9375)\psframe(1.5,2.875)(1.5625,2.9375)\psframe(1.5625,2.875)(1.625,2.9375)\psframe(1.625,2.875)(1.6875,2.9375)\psframe(1.6875,2.875)(1.75,2.9375)\psframe(1.75,2.875)(1.8125,2.9375)\psframe(1.8125,2.875)(1.875,2.9375)\psframe(1.875,2.875)(1.9375,2.9375)\psframe(1.9375,2.875)(2,2.9375)\psframe(2,2.875)(2.0625,2.9375)\psframe(2.0625,2.875)(2.125,2.9375)\psframe(2.125,2.875)(2.1875,2.9375)\psframe(2.1875,2.875)(2.25,2.9375)\psframe(2.25,2.875)(2.3125,2.9375)\psframe(2.3125,2.875)(2.375,2.9375)\psframe(2.375,2.875)(2.4375,2.9375)\psframe(2.4375,2.875)(2.5,2.9375)\psframe(2.5,2.875)(2.5625,2.9375)\psframe(2.5625,2.875)(2.625,2.9375)\psframe(2.625,2.875)(2.6875,2.9375)\psframe(2.6875,2.875)(2.75,2.9375)\psframe(2.75,2.875)(2.8125,2.9375)\psframe(2.8125,2.875)(2.875,2.9375)\psframe(2.875,2.875)(2.9375,2.9375)\psframe(-2.9375,-2.9375)(-2.875,-2.875)\psframe(-2.9375,-2.875)(-2.875,-2.8125)
\psframe(-2.9375,-2.8125)(-2.875,-2.75)\psframe(-2.9375,-2.75)(-2.875,-2.6875)
\psframe(-2.9375,-2.6875)(-2.875,-2.625)\psframe(-2.9375,-2.625)(-2.875,-2.5625)\psframe(-2.9375,-2.5625)(-2.875,-2.5)\psframe(-2.9375,-2.5)(-2.875,-2.4375)\psframe(-2.9375,-2.4375)(-2.875,-2.375)\psframe(-2.9375,-2.375)(-2.875,-2.3125)\psframe(-2.9375,-2.3125)(-2.875,-2.25)\psframe(-2.9375,-2.25)(-2.875,-2.1875)\psframe(-2.9375,-2.1875)(-2.875,-2.125)\psframe(-2.9375,-2.125)(-2.875,-2.0625)\psframe(-2.9375,-2.0625)(-2.875,-2)\psframe(-2.9375,-2)(-2.875,-1.9375)\psframe(-2.9375,-1.9375)(-2.875,-1.875)\psframe(-2.9375,-1.875)(-2.875,-1.8125)\psframe(-2.9375,-1.8125)(-2.875,-1.75)\psframe(-2.9375,-1.75)(-2.875,-1.6875)\psframe(-2.9375,-1.6875)(-2.875,-1.625)\psframe(-2.9375,-1.625)(-2.875,-1.5625)\psframe(-2.9375,-1.5625)(-2.875,-1.5)\psframe(-2.9375,-1.5)(-2.875,-1.4375)\psframe(-2.9375,-1.4375)(-2.875,-1.375)\psframe(-2.9375,-1.375)(-2.875,-1.3125)\psframe(-2.9375,-1.3125)(-2.875,-1.25)\psframe(-2.9375,-1.25)(-2.875,-1.1875)
\psframe(-2.9375,-1.1875)(-2.875,-1.125)\psframe(-2.9375,-1.125)(-2.875,-1.0625)\psframe(-2.9375,-1.0625)(-2.875,-1)
\psframe(-2.9375,-1)(-2.875,-0.9375)\psframe(-2.9375,-0.9375)(-2.875,-0.875)\psframe(-2.9375,-0.875)(-2.875,-0.8125)\psframe(-2.9375,-0.8125)(-2.875,-0.75)\psframe(-2.9375,-0.75)(-2.875,-0.6875)\psframe(-2.9375,-0.6875)(-2.875,-0.625)\psframe(-2.9375,-0.625)(-2.875,-0.5625)\psframe(-2.9375,-0.5625)(-2.875,-0.5)\psframe(-2.9375,-0.5)(-2.875,-0.4375)\psframe(-2.9375,-0.4375)(-2.875,-0.375)\psframe(-2.9375,-0.375)(-2.875,-0.3125)\psframe(-2.9375,-0.3125)(-2.875,-0.25)\psframe(-2.9375,-0.25)(-2.875,-0.1875)\psframe(-2.9375,-0.1875)(-2.875,-0.125)\psframe(-2.9375,-0.125)(-2.875,-0.0625)\psframe(-2.9375,-0.0625)(-2.875,0)\psframe(-2.9375,0)(-2.875,0.0625)\psframe(-2.9375,0.0625)(-2.875,0.125)\psframe(-2.9375,0.125)(-2.875,0.1875)\psframe(-2.9375,0.1875)(-2.875,0.25)\psframe(-2.9375,0.25)(-2.875,0.3125)\psframe(-2.9375,0.3125)(-2.875,0.375)\psframe(-2.9375,0.375)(-2.875,0.4375)\psframe(-2.9375,0.4375)(-2.875,0.5)\psframe(-2.9375,0.5)(-2.875,0.5625)\psframe(-2.9375,0.5625)(-2.875,0.625)
\psframe(-2.9375,0.625)(-2.875,0.6875)\psframe(-2.9375,0.6875)(-2.875,0.75)\psframe(-2.9375,0.75)(-2.875,0.8125)\psframe(-2.9375,0.8125)(-2.875,0.875)\psframe(-2.9375,0.875)(-2.875,0.9375)\psframe(-2.9375,0.9375)(-2.875,1)\psframe(-2.9375,1)(-2.875,1.0625)\psframe(-2.9375,1.0625)(-2.875,1.125)\psframe(-2.9375,1.125)(-2.875,1.1875)\psframe(-2.9375,1.1875)(-2.875,1.25)\psframe(-2.9375,1.25)(-2.875,1.3125)\psframe(-2.9375,1.3125)(-2.875,1.375)\psframe(-2.9375,1.375)(-2.875,1.4375)\psframe(-2.9375,1.4375)(-2.875,1.5)\psframe(-2.9375,1.5)(-2.875,1.5625)\psframe(-2.9375,1.5625)(-2.875,1.625)\psframe(-2.9375,1.625)(-2.875,1.6875)\psframe(-2.9375,1.6875)(-2.875,1.75)\psframe(-2.9375,1.75)(-2.875,1.8125)\psframe(-2.9375,1.8125)(-2.875,1.875)\psframe(-2.9375,1.875)(-2.875,1.9375)\psframe(-2.9375,1.9375)(-2.875,2)\psframe(-2.9375,2)(-2.875,2.0625)\psframe(-2.9375,2.0625)(-2.875,2.125)\psframe(-2.9375,2.125)(-2.875,2.1875)\psframe(-2.9375,2.1875)(-2.875,2.25)\psframe(-2.9375,2.25)(-2.875,2.3125)
\psframe(-2.9375,2.3125)(-2.875,2.375)
\psframe(-2.9375,2.375)(-2.875,2.4375)\psframe(-2.9375,2.4375)(-2.875,2.5)\psframe(-2.9375,2.5)(-2.875,2.5625)\psframe(-2.9375,2.5625)(-2.875,2.625)\psframe(-2.9375,2.625)(-2.875,2.6875)\psframe(-2.9375,2.6875)(-2.875,2.75)\psframe(-2.9375,2.75)(-2.875,2.8125)\psframe(-2.9375,2.8125)(-2.875,2.875)\psframe(-2.9375,2.875)(-2.875,2.9375)\psframe(2.875,-2.9375)(2.9375,-2.875)\psframe(2.875,-2.875)(2.9375,-2.8125)\psframe(2.875,-2.8125)(2.9375,-2.75)\psframe(2.875,-2.75)(2.9375,-2.6875)\psframe(2.875,-2.6875)(2.9375,-2.625)\psframe(2.875,-2.625)(2.9375,-2.5625)\psframe(2.875,-2.5625)(2.9375,-2.5)\psframe(2.875,-2.5)(2.9375,-2.4375)\psframe(2.875,-2.4375)(2.9375,-2.375)\psframe(2.875,-2.375)(2.9375,-2.3125)\psframe(2.875,-2.3125)(2.9375,-2.25)\psframe(2.875,-2.25)(2.9375,-2.1875)\psframe(2.875,-2.1875)(2.9375,-2.125)\psframe(2.875,-2.125)(2.9375,-2.0625)\psframe(2.875,-2.0625)(2.9375,-2)\psframe(2.875,-2)(2.9375,-1.9375)\psframe(2.875,-1.9375)(2.9375,-1.875)\psframe(2.875,-1.875)(2.9375,-1.8125)
\psframe(2.875,-1.8125)(2.9375,-1.75)\psframe(2.875,-1.75)(2.9375,-1.6875)\psframe(2.875,-1.6875)(2.9375,-1.625)\psframe(2.875,-1.625)(2.9375,-1.5625)\psframe(2.875,-1.5625)(2.9375,-1.5)\psframe(2.875,-1.5)(2.9375,-1.4375)\psframe(2.875,-1.4375)(2.9375,-1.375)\psframe(2.875,-1.375)(2.9375,-1.3125)\psframe(2.875,-1.3125)(2.9375,-1.25)\psframe(2.875,-1.25)(2.9375,-1.1875)\psframe(2.875,-1.1875)(2.9375,-1.125)\psframe(2.875,-1.125)(2.9375,-1.0625)\psframe(2.875,-1.0625)(2.9375,-1)\psframe(2.875,-1)(2.9375,-0.9375)\psframe(2.875,-0.9375)(2.9375,-0.875)\psframe(2.875,-0.875)(2.9375,-0.8125)\psframe(2.875,-0.8125)(2.9375,-0.75)\psframe(2.875,-0.75)(2.9375,-0.6875)\psframe(2.875,-0.6875)(2.9375,-0.625)\psframe(2.875,-0.625)(2.9375,-0.5625)\psframe(2.875,-0.5625)(2.9375,-0.5)\psframe(2.875,-0.5)(2.9375,-0.4375)\psframe(2.875,-0.4375)(2.9375,-0.375)\psframe(2.875,-0.375)(2.9375,-0.3125)\psframe(2.875,-0.3125)(2.9375,-0.25)\psframe(2.875,-0.25)(2.9375,-0.1875)\psframe(2.875,-0.1875)(2.9375,-0.125)\psframe(2.875,-0.125)
(2.9375,-0.0625)
\psframe(2.875,-0.0625)(2.9375,0)\psframe(2.875,0)(2.9375,0.0625)\psframe(2.875,0.0625)(2.9375,0.125)\psframe(2.875,0.125)(2.9375,0.1875)\psframe(2.875,0.1875)(2.9375,0.25)\psframe(2.875,0.25)(2.9375,0.3125)\psframe(2.875,0.3125)(2.9375,0.375)\psframe(2.875,0.375)(2.9375,0.4375)\psframe(2.875,0.4375)(2.9375,0.5)\psframe(2.875,0.5)(2.9375,0.5625)\psframe(2.875,0.5625)(2.9375,0.625)\psframe(2.875,0.625)(2.9375,0.6875)\psframe(2.875,0.6875)(2.9375,0.75)\psframe(2.875,0.75)(2.9375,0.8125)\psframe(2.875,0.8125)(2.9375,0.875)\psframe(2.875,0.875)(2.9375,0.9375)\psframe(2.875,0.9375)(2.9375,1)\psframe(2.875,1)(2.9375,1.0625)\psframe(2.875,1.0625)(2.9375,1.125)\psframe(2.875,1.125)(2.9375,1.1875)\psframe(2.875,1.1875)(2.9375,1.25)\psframe(2.875,1.25)(2.9375,1.3125)\psframe(2.875,1.3125)(2.9375,1.375)\psframe(2.875,1.375)(2.9375,1.4375)\psframe(2.875,1.4375)(2.9375,1.5)\psframe(2.875,1.5)(2.9375,1.5625)\psframe(2.875,1.5625)(2.9375,1.625)\psframe(2.875,1.625)(2.9375,1.6875)\psframe(2.875,1.6875)(2.9375,1.75)
\psframe(2.875,1.75)(2.9375,1.8125)\psframe(2.875,1.8125)(2.9375,1.875)\psframe(2.875,1.875)(2.9375,1.9375)\psframe(2.875,1.9375)(2.9375,2)\psframe(2.875,2)(2.9375,2.0625)\psframe(2.875,2.0625)(2.9375,2.125)\psframe(2.875,2.125)(2.9375,2.1875)\psframe(2.875,2.1875)(2.9375,2.25)\psframe(2.875,2.25)(2.9375,2.3125)\psframe(2.875,2.3125)(2.9375,2.375)\psframe(2.875,2.375)(2.9375,2.4375)\psframe(2.875,2.4375)(2.9375,2.5)\psframe(2.875,2.5)(2.9375,2.5625)\psframe(2.875,2.5625)(2.9375,2.625)\psframe(2.875,2.625)(2.9375,2.6875)\psframe(2.875,2.6875)(2.9375,2.75)\psframe(2.875,2.75)(2.9375,2.8125)\psframe(2.875,2.8125)(2.9375,2.875)\psframe(2.875,2.875)(2.9375,2.9375)

	\psframe[linestyle=dotted, linewidth=1pt](-3,-3)(3,3)
\end{pspicture}
\end{center}
\centerline{{\small \textbf{Figure 2:} Whitney Decomposition of $D$ when $n=2$} }

\medskip\noindent \textbf{Step 5: Construction of $w$} 

\smallskip\noindent We now define our map $w=w_{n}$ as follows. We shall define it on $D$ instead of $Q^{(n)}$ (then we may just take $\bar{w}(x) = \frac{1}{3} w(3x)$).

   Let $Q$ be a cube in $\mathscr{Q}_{k}$ for $k\geq 2$. Write $Q=x+(-r,r)^{n}$ where $x$ is the centre of the cube, and $2r$ is the side length. Then note that in all but $2$ opposite faces, a face of $Q$ is shared with other cubes in $\mathscr{Q}_{k}$. One of the remaining faces is shared with (part of) a face of a larger cube with side length $4r$ from $\mathscr{Q}_{k-1}$, and the other opposite face is shared with the faces of $2^{n-1}$ smaller cubes of length $r$ in $\mathscr{Q}_{k+1}$. We apply the map $v$ from step 4, appropriately scaled, to map points in $Q$ to a one-dimensional frame of $Q$, where the modification of $u$ occurs on the cone corresponding to this latter face, which is split into a finer subframe. That is, for $y\in Q$,
\begin{equation*}
 w(y):= x+ r\,v\Big(\frac{y-x}{r}\Big)\,\textrm{,}
\end{equation*}
where $v=u$ on all cones of $(-1,1)^{n}$ where 
\begin{equation*}
x+r\frac{y-x}{\|y-x\|}
\end{equation*}
lies on a face of $Q$ that is shared with a cube of the same or larger size. Otherwise, if this expression lies on the one face that is shared with $2^{n-1}$ faces of cubes in $\mathscr{Q}_{k+1}$, we use the definition of $v$ as given in \eqref{mapvnminus} and \eqref{mapv}. Now note that
\begin{align*}
 \int_{Q}|\nabla w (y)|^{p}\ud y &= \int_{Q}\Big|\nabla v\Big(\frac{y-x}{r}\Big)\Big|^{p}\ud y \\
&= r^{n} \int_{(-1,1)^{n}}|\nabla v(y))|^{p}\ud y\\
&= 2^{-n}\Ln(Q) \int_{(-1,1)^{n}}|\nabla v(y))|^{p}\ud y\,\textrm{.}
\end{align*}
We do this for all cubes $Q\in \mathscr{Q}_{k}$ for all $k\geq 2$. Moreover, note that by our construction there are no discontinuities on the boundaries $\partial Q$ of these cubes. Hence we have
\begin{align*}
 \int_{D\setminus [-1,1]^{n}}|\nabla w|^{p}\ud x &= \sum_{k=2}^{\infty}\sum_{Q\in\mathscr{Q}_{k}} \int_{Q}|\nabla w|^{p}\ud x \\
&= \sum_{k=2}^{\infty}\sum_{Q\in\mathscr{Q}_{k}} 2^{-n}\Ln(Q) \int_{(-1,1)^{n}}|\nabla v|^{p}\ud x\\
&= 2^{-n}\Ln(D\setminus [-1,1]^{n})\int_{(-1,1)^{n}}|\nabla v|^{p}\ud x\\
&= (3^{n}-1)\int_{(-1,1)^{n}}|\nabla v|^{p}\ud x\,\textrm{,}
\end{align*}
and so $w\in W^{1,p}(D\setminus (-1,1)^{n};\Rn)$. 

Note that on the central cube $Q_{1}$, \textit{every} face is shared with the $2^{n-1}$ faces of cubes in $\mathscr{Q}_{2}$, not just one. Hence in this case we modify the definition of $u$ as in Step 3, not just on one cone, but all cones, and let $w$ be equal to such a map on $Q_{1}$. Again, we observe that $w$ has no discontinuities on $\partial Q_{1}$. 

Therefore we have $w\in W^{1,p}(D;\Rn)$, and $\textrm{rank}(\nabla w (x)) = 1$ for $\Ln$-almost all $x\in D$ (in fact, even $\mathscr{H}^{n-1}$ almost all $x$). Now we shall show that $w=\iota$ on $\partial D$. First note that if $\textrm{dist}(x,\partial D)<\epsilon<2$, then $x\in \overline{Q}$ for some $Q\in\mathscr{Q}_{k}$, where $k\geq k_{0}$ and $2^{3-k_{0}}<\epsilon$. Since $w(x)\in \overline{Q}$, we have 
\begin{equation*}
 |w(x)-x|\leq\textrm{diam}(Q) < \epsilon\,\textrm{,}
\end{equation*}
Let $(R_{h})\subset (0,1)$ be an increasing sequence with $R_{h}\nearrow 1$, and let $\rho_{h}\in C^{1}_{c}(D)$ be a cut-off function such that $0\leq \rho\leq 1$, $\rho_{h}=1$ on $R_{h}D:=(-3R_{h},3R_{h})^{n}$, and 
\begin{equation*}
 |\nabla\rho_{h}|\leq \frac{c}{1-R_{h}}
\end{equation*}
for some fixed constant $c>0$ independent of $h$. Consider $w_{h}:= \rho_{h}w+(1-\rho_{h})\iota$. Then note $(w_{h})\subset W^{1,p}_{\iota}(D;\Rn)$, and
\begin{align*}
 \|w-w_{h}\|_{\infty} &= \|(1-\rho_{h})(w-\iota)\|_{\infty}\\
&\leq \sup_{x\in D\setminus R_{h}D}|w(x)-x|\\
&\longrightarrow 0\quad\textrm{as }h\rightarrow\infty\,\textrm{,}
\end{align*}
so $w_{h}\rightarrow w$ in $L^{\infty}(D;\Rn)$. Moreover, we have
\begin{equation*}
 \nabla w_{h} = \rho_{h}\nabla w + (1-\rho_{h})I + (w-\iota)\otimes \nabla\rho_{h}\,\textrm{,}
\end{equation*}
and (for positive constants $c$, independent of $h$, that may not be the same from line to line), 
\begin{align*}
 \int_{D}|\nabla w-\nabla w_{h}|^{p}\ud x &=\int_{D\setminus R_{h}D} |(1-\rho_{h})(\nabla w - I)+(w-\iota)\otimes \nabla\rho_{h}|^{p}\ud x \\
&\leq c\int_{D\setminus R_{h}D}|\nabla w|^{p}+1\ud x + \frac{c}{1-R_{h}}\int_{D\setminus R_{h}D} |w-\iota|^{p}\ud x\\
&\leq c\int_{D\setminus R_{h}D}|\nabla w|^{p}+1\ud x +c \Bigg(\frac{1-R_{h}^{n}}{1-R_{h}}\Bigg)\sup_{x\in D\setminus R_{h}D}|w(x)-x|\\
&\longrightarrow 0\quad\textrm{as }h\rightarrow\infty\,\textrm{.}
\end{align*}
Hence $w_{h}\rightarrow w$ strongly in $W^{1,p}(D;\Rn)$, so we have $w\in W^{1,p}_{\iota}(D;\Rn)$ as required. 

We have shown that if $w_{n-1}$ satisfies all required properties of Lemma \ref{lemma2}, then so does $w=w_{n}$. Thus, by Step 1 and induction, the lemma is proved. 

\nopagebreak\hfill $\Box$

\bigskip\noindent \textbf{Step 6: Proof of Lemma \ref{lemma3}} 

\smallskip\noindent The proof of this lemma follows almost exactly the same steps as Lemma \ref{lemma2}. The only difference is that the base case is $n=k$, and we start by defining the map  $\tilde{w}_{k}(x):=x/\|x\|$ on $Q^{(k)}$. By an entirely similar arguement as in Step 1 (see also \cite{BallMurat}), we establish that $\tilde{w}_{k}$ satisfies the required properties of the lemma when $n=k$. For larger $n$, we then apply the inductive hypothesis that for all $k\leq d < n$, we have constructed a map $\tilde{w}_{d}$ satisfying the lemma in dimension $d$. We define $\tilde{u}$ on $Q^{(n)}$ precisely as in Step 2, using $\tilde{w}_{n-1}$ instead of $w_{n-1}$ and modify it on cones as in Step 3. The Whitney Decomposition of $Q^{(n)}$ in Step 4 remains the same, and in Step 5 we define $\tilde{w}=\tilde w_{n}$ using this decomposition and the modified map $\tilde{u}$. 

\nopagebreak\hfill $\Box$

\bigskip\noindent \textbf{Remark on generalising the above construction} 

 \noindent Recall that a general Whitney Decomposition allows us to partition a general open, bounded subset $\Omega\subset\Rn$ into closed diadic cubes $(Q_{j})_{j\in\mathbb{N}}$ with pairwise disjoint interior, satisfying
 \begin{equation*}
  \mathrm{diam}(Q_{j})\leq \mathrm{dist}(Q_{j};\partial\Omega)\leq 4 \mathrm{diam}(Q_{j})\quad\textrm{for all }j\,\textrm{.}
 \end{equation*}
 In this connection, we refer to, for example \cite{whitney, harmonic}. Hence we can refine the statements of Lemma \ref{lemma2} and Lemma \ref{lemma3} respectively so that they satisfy the requisite properties on such general $\Omega$. For example, we have the following corollary. It can either be proved by an easy modification of the above proof, taking care of the cones in each individual cube where the map $u$ from Step 2 needs to be modified, or by simply using the results in these lemmas. We proceed using the latter method.

\begin{cor}
 Let $\Omega\subset\Rn$ be open and bounded. Suppose $g\colon\Omega\rightarrow\RN$ is an affine map. Then for $1\leq p < 2$ there exists a map $u\colon\Omega\rightarrow\RN$ such that $u\in W^{1,p}_{g}(\Omega;\RN)$ and $\textrm{rank}(\nabla u(x))\leq 1$ for almost all $x\in\Omega$. 

More generally, if $2\leq k\leq n$ and $1\leq p <k$, there exists a map $u\colon\Omega\rightarrow\RN$ such that $u\in W^{1,p}_{g}(\Omega;\RN)$ and $\textrm{rank}(\nabla u(x))\leq k-1$ for almost all $x\in\Omega$. 

\end{cor}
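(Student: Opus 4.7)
The plan is to prove the corollary by applying Lemma \ref{lemma2} (resp.\ Lemma \ref{lemma3}) on each cube of a Whitney decomposition of $\Omega$. Write the affine map as $g(y) = \xi y + b$ for some $\xi \in \M$ and $b \in \RN$, and take a Whitney decomposition $\Omega = \bigcup_{j} \overline{Q_{j}}$ into closed dyadic cubes with pairwise disjoint interiors, where $Q_{j} = x_{j} + (-r_{j}, r_{j})^{n}$ satisfies $\mathrm{diam}(Q_{j}) \leq \mathrm{dist}(Q_{j}, \partial\Omega) \leq 4\,\mathrm{diam}(Q_{j})$.

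On each cube $Q_{j}$, let $\tilde{w}$ denote the map from Lemma \ref{lemma3} (applied with the relevant $k$; for the first statement we take $\tilde{w} = w$ from Lemma \ref{lemma2}), and define
\begin{equation*}
 u(y) := g(x_{j}) + \xi\bigl[r_{j}\,\tilde{w}\bigl((y - x_{j})/r_{j}\bigr)\bigr], \quad y \in Q_{j}.
\end{equation*}
Since $\tilde{w}$ equals the identity on $\partial(-1,1)^{n}$ in the sense of traces, the trace of $u$ on $\partial Q_{j}$ equals $g(x_{j}) + \xi(y - x_{j}) = g(y)$; hence the definitions on neighbouring cubes agree on the shared faces, so $u$ is unambiguously defined on $\Omega$. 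Moreover $\nabla u(y) = \xi\, \nabla \tilde{w}((y-x_{j})/r_{j})$, so $\textrm{rank}(\nabla u(y)) \leq \textrm{rank}(\nabla \tilde{w}) \leq k-1$ almost everywhere. Using the change of variables $z = (y - x_{j})/r_{j}$ we compute
\begin{equation*}
 \int_{Q_{j}} |\nabla u|^{p} \ud y = r_{j}^{n} \int_{(-1,1)^{n}} |\xi \nabla \tilde{w}(z)|^{p}\ud z \leq C |\xi|^{p}\, \Ln(Q_{j}),
\end{equation*}
and summing over $j$ yields $\int_{\Omega} |\nabla u|^{p} \ud x \leq C|\xi|^{p}\Ln(\Omega) < \infty$, so $u \in W^{1,p}(\Omega;\RN)$.

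It remains to verify the boundary condition $u - g \in W^{1,p}_{0}(\Omega; \RN)$. The key observation is that on each $Q_{j}$, since $\tilde{w}$ takes values in $\overline{(-1,1)^{n}}$, one has $|u(y) - g(y)| \leq |\xi| \cdot \mathrm{diam}(Q_{j})$, which by the Whitney property is bounded by $|\xi| \cdot \mathrm{dist}(Q_{j}, \partial \Omega)$. Thus $u(y) \to g(y)$ uniformly as $y \to \partial\Omega$. One then runs essentially the same cutoff argument as in Step 5 of the proof of Lemma \ref{lemma2}: choose cutoffs $\rho_{h} \in C^{1}_{c}(\Omega)$ with $\rho_{h} \to 1$ on compact subsets and $|\nabla \rho_{h}| \lesssim 1/\mathrm{dist}(\cdot, \partial\Omega)$ on the support of $\nabla \rho_{h}$, and set $u_{h} := \rho_{h} u + (1 - \rho_{h}) g$. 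The term $|(u - g) \otimes \nabla \rho_{h}|$ is controlled because $|u - g|$ decays at the same rate as $\mathrm{dist}(\cdot, \partial\Omega)$, which exactly cancels the blow-up of $\nabla \rho_{h}$; together with $u \in W^{1,p}(\Omega;\RN)$ this gives $u_{h} \to u$ strongly in $W^{1,p}$, proving $u \in W^{1,p}_{g}(\Omega;\RN)$.

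I expect the only nontrivial point to be this final cutoff argument: it is essentially a copy of Step 5 in the main proof, but now one must use the Whitney property carefully in the form $|u - g| \lesssim \mathrm{dist}(\cdot, \partial\Omega)$ to beat the singular gradient of $\rho_{h}$. The construction on individual cubes and the rank bound follow immediately from Lemma \ref{lemma3} applied as a black box.
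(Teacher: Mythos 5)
Your proof is correct and takes essentially the same approach as the paper: take a Whitney decomposition of $\Omega$, glue in the (appropriately scaled and translated) maps from Lemma \ref{lemma2} or Lemma \ref{lemma3} on each cube, and post-compose with the linear part $\xi$ of $g$. The paper states the remaining verification as ``straightforward,'' referring back to the proof of Theorem \ref{thm1} and Step 5 of Lemma \ref{lemma2}; your explicit check that the Whitney bound $|u-g|\lesssim\mathrm{dist}(\cdot,\partial\Omega)$ cancels the blow-up of $\nabla\rho_{h}$ in the cutoff argument is exactly the detail that needs to be verified.
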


\begin{proof}
 Take a general Whitney Decomposition of $\Omega$ as described above, where the cubes have sides parallel to the coordinate axes. First define $v\colon\Omega\rightarrow\Rn$ as follows: for a given cube $Q_{j}=x+r(-1,1)^{n}$ in this decomposition, let
\begin{equation}
 v(y)= x+ r w\Big( \frac{y-x}{r}\Big)\,\textrm{,} \label{genw}
\end{equation}
where $w$ is the map from Lemma \ref{lemma2}. Do this for every cube in the decomposition. Writing $g(y)= z+\xi y$ for some $z\in\R^{N}$, $\xi\in\RNn$, let $u(y)= z+\xi v(y)$ . Then it is straightforward to verify that $u$ satisfies the required properties in (first statement of) the corollary (see the proof of Theorem \ref{thm1} above). For the general statement, use $\tilde{w}$ from Lemma \ref{lemma3} instead of $w$ in \eqref{genw}.
\end{proof}

\providecommand{\bysame}{\leavevmode\hbox to3em{\hrulefill}\thinspace}
\providecommand{\MR}{\relax\ifhmode\unskip\space\fi MR }
\providecommand{\MRhref}[2]{%
  \href{http://www.ams.org/mathscinet-getitem?mr=#1}{#2}
}
\providecommand{\href}[2]{#2}

\end{document}